\newtheorem{thm}{Theorem}
\newtheorem{coro}[thm]{Corollary}
\newtheorem{prop}[thm]{Proposition}
\newcommand{\be}{\begin{enumerate}}
\newcommand{\ee}{\end{enumerate}}
\newcommand{\beq}{\begin{equation}}
\newcommand{\eeq}{\end{equation}}
\newcommand{\beas}{\begin{eqnarray*}}
\newcommand{\eeas}{\end{eqnarray*}}
\newcommand{\bea}{\begin{eqnarray}}
\newcommand{\eea}{\end{eqnarray}}
\newcommand{\st}{\,:\,}
\theoremstyle{definition}
\newtheorem{defn}[thm]{Definition}
\newtheorem{conj}[thm]{Conjecture}
\newcommand{\ffp}{\mathbb{F}_p}
\newcommand{\zz}{\mathbb{Z}}
\newcommand{\rr}{\mathbb{R}}
\newcommand{\pp}{\mathbb{P}}
\newcommand{\sn}{\mathfrak{S}_n}
\newcommand{\ca}{\mathcal{A}}
\newcommand{\calc}{\mathcal{C}}
\newcommand{\ci}{\mathcal{I}}
\newcommand{\vo}{\mathrm{vo}}
\newcommand{\cp}{\mathcal{P}}
\newcommand{\aff}{\mathrm{aff}}
\newcommand{\vis}{\mathrm{vis}}
\newcommand{\orp}{\mathcal{O}(P)}
\newcommand{\sco}{\mathrm{sc}}
\makeatletter \@addtoreset{equation}{section}
\begin{document}

\title{Valid Orderings of Real Hyperplane Arrangements}

\author{Richard P.
  Stanley\footnote{ Department of Mathematics, Massachusetts Institute
    of Mathematics, Cambridge, MA 02139, USA. Email:
    rstan@math.mit.edu. Based
    upon work supported by the National 
    Science Foundation under Grant No.~DMS-1068625.}}

\date{June 7, 2013}

\maketitle

\noindent{\bf Abstract.} Given a real finite hyperplane arrangement
$\ca$ and a point $p$ not on any of the hyperplanes, we define an
arrangement $\vo(\ca,p)$, called the \emph{valid order arrangement},
whose regions correspond to the different orders in which a line
through $p$ can cross the hyperplanes in $\ca$. If $\ca$ is the set of
affine spans of the facets of a convex polytope $\cp$ and $p$ lies in
the interior of $\cp$, then the valid orderings with respect to $p$
are just the line shellings of $\cp$ where the shelling line contains
$p$. When $p$ is sufficiently generic, the intersection lattice of
$\vo(\ca,p)$ is the \emph{Dilworth truncation} of the semicone of
$\ca$. Various applications and examples are given. For instance, we
determine the maximum number of line shellings of a $d$-polytope with
$m$ facets when the shelling line contains a fixed point $p$. If $\cp$
is the order polytope of a poset, then the sets of facets visible from
a point involve a generalization of chromatic polynomials related to
list colorings.

\vskip 3mm \noindent {\it Keywords}: hyperplane arrangement, matroid,
Dilworth truncation, line shelling, order polytope, chromatic
polynomial

\noindent {\bf AMS Classification:} primary 52C35; secondary 52B22,
05C15 

\noindent

\section{Introduction}
Let $\ca$ be a (finite) real hyperplane arrangement, i.e., a finite
set of affine hyperplanes in some $d$-dimensional real affine space
$V\cong \rr^d$. Since we consider only hyperplane arrangements in this
paper, we call $\ca$ simply a real arrangement, always assumed to be
finite. Basic information on arrangements may be found in Orlik and
Terao \cite{or-te} and Stanley \cite{rs:hyp}.

The main question that will concern us is the following. Let $L$ be a
directed line in $V$. If $L$ is sufficiently generic then it will
cross the hyperplanes $H\in\ca$ in a certain order. 
What can we say about the possible orders of the hyperplanes?
We can say more when we fix a point $p\in V$ not lying on
any of the hyperplanes in $\ca$ and assume that $L$ passes through
$p$. The different orders then correspond in a simple way to regions
of another arrangement, which we call the \emph{valid order
arrangement} $\vo(\ca,p)$.

A special situation occurs when $\ca$ consists of the affine spans of
the facets of a $d$-dimensional convex polytope $\cp$ in $\rr^d$. We
then call $\ca$ the \emph{visibility arrangement} vis$(\cp)$ of $\cp$,
since its regions correspond to sets of facets of $\cp$ visible from
some point. If $p$ lies in the interior of $\cp$, then the regions of
the valid order arrangement $\vo(\ca)$ correspond to the line
shellings of $\cp$, where the line defining the shelling (which we
call the \emph{shelling line}) passes through $p$. In this case we
call $\vo(\ca,p)$ the \emph{line shelling arrangement} of $\cp$ (with
respect to $p$).

We will discuss a number of results concerning visibility and valid
order arrangements. Most notably, when $p$ is sufficiently generic,
then the matroid corresponding to the semicone (defined below) of
$\vo(\ca,p)$ is the Dilworth 
truncation of the matroid corresponding to $\ca$. This observation
enables us (Theorem~\ref{thm:max}) to answer the following question:
given $n\geq d+1$, what is the most number of line shellings that a
convex $d$-polytope with $n$ facets can have, where the shelling line
passses through a fixed point $p$? Another result
(Theorem~\ref{thm:vop}) is a connection between the visibility
arrangement of the order polytope of a poset and a generalization of
chromatic polynomials.

\section{The valid order arrangment}
Let $\ca$ be a hyperplane arrangement in a real affine space $V$, and
let $p$ be a point in $V$ not lying on any hyperplane $H\in\ca$. 

\begin{defn}
The \emph{valid order arrangement} $\vo(\ca,p)$ consists of all
hyperplanes of the following two types:
 \begin{itemize} 
   \item The affine span of $p$ and $H\cap H'$, where $H$ and
     $H'$ are two non-parallel hyperplanes in $\ca$. We denote this
     affine span as aff$(p,H\cap H')$.
   \item The hyperplane through $p$ parallel to two parallel
     hyperplanes $H,H'\in \ca$, denoted par$(p,H)$ or par$(p,H')$. 
  \end{itemize}
\end{defn}

Note that $\vo(\ca,p)$ is a \emph{central} arrangement, i.e., all the
hyperplanes in $\vo(\ca,p)$ intersect, since every hyperplane in
$\vo(\ca,p)$ contains $p$.

Consider a directed line $L$ through $p$ that is not parallel to any
hyperplane $H\in\ca$ and that does not intersect two distinct
hyperplanes of $\ca$ in the same point. Thus $L$ intersects the
hyperplanes in $\ca$ in some order $H_1,H_2,\dots,H_m$ as we come in
from $\infty$ along $L$ in the direction of $L$. We call the sequence
$H_1,\dots,H_m$ a \emph{valid ordering} of $\ca$ with respect to
$p$. Note that if we reverse the direction of $L$, then we get a new
valid ordering $H_m,\dots, H_1$.

Suppose that $H$ and $H'$ are two non-parallel hyperplanes of
$\ca$. The question of whether $L$ intersects $H$ before $H'$ depends
on which side of the hyperplane aff$(p,H\cap H')$ a point $q$ lies,
where $q$ is a point of $L$ near $p$ in the positive direction (the
direction of $L$) from $p$. Similarly, if $H$ and $H'$ are parallel
hyperplanes of $\ca$, then either $q$ lies on the same side of both
(i.e., not between them), in which case the order in which $L$
intersects $H$ and $H'$ is independent of $L$, or else $q$ lies
between $H$ and $H'$, in which case the order in which $L$ intersects
$H$ and $H'$ depends on which side of the hyperplane par$(p,H)$ the
point $q$ lies. It follows that the valid ordering corresponding to
$L$ is determined by which region of $\vo(\ca,p)$ the point $q$
lies. In particular, we have the following result.

\begin{prop}
The number of valid orderings of $\ca$ with respect to $p$ is equal to
the number $r(\vo(\ca,p))$ of regions of the valid order arrangement
$\vo(\ca,p)$
\end{prop}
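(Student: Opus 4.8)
The plan is to make precise the correspondence, already sketched in the paragraph preceding the statement, between valid orderings and regions of $\vo(\ca,p)$, and then to check that this correspondence is a bijection. The key observation is that a valid ordering is, by definition, an equivalence class of directed lines $L$ through $p$ (two such lines being equivalent if they cross the hyperplanes of $\ca$ in the same order), and that what matters about $L$ is only the direction in which it leaves $p$. So the first step is to associate to each admissible directed line $L$ the point $q=q(L)$ on $L$ lying a small distance $\varepsilon>0$ past $p$ in the positive direction; for $\varepsilon$ small enough (depending on $L$), $q$ lies in the open region of $\vo(\ca,p)$ determined by the direction of $L$, and every region of $\vo(\ca,p)$ arises this way since $\vo(\ca,p)$ is central with center containing $p$, so each of its regions is a cone with apex $p$ and hence contains points arbitrarily close to $p$.

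The second step is to verify that the region of $\vo(\ca,p)$ containing $q$ determines, and is determined by, the valid ordering of $L$. Here I would use exactly the case analysis in the text: for each unordered pair $\{H,H'\}$ of hyperplanes of $\ca$, whether $L$ meets $H$ before $H'$ is governed either by the side of $\mathrm{aff}(p,H\cap H')$ on which $q$ lies (when $H,H'$ are non-parallel), or — when $H,H'$ are parallel and $q$ lies between them — by the side of $\mathrm{par}(p,H)$ on which $q$ lies; and when $H,H'$ are parallel with $q$ not between them, the relative order is forced independently of $L$. Since $\vo(\ca,p)$ consists precisely of the hyperplanes $\mathrm{aff}(p,H\cap H')$ and $\mathrm{par}(p,H)$, the region of $q$ in $\vo(\ca,p)$ records exactly the sign vector of $q$ against all these hyperplanes, hence records the pairwise order of all crossings, hence records the full valid ordering. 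Conversely, two admissible lines whose associated points $q$ lie in the same region of $\vo(\ca,p)$ produce the same sign vector and therefore the same valid ordering. This establishes a well-defined bijection between valid orderings and regions of $\vo(\ca,p)$, giving the count $r(\vo(\ca,p))$.

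I expect the only genuinely delicate point to be the claim that the pairwise orderings of the crossings actually reconstruct a single total order, i.e., that the sign data are consistent. This is really a transitivity statement: along a fixed line $L$ the crossing points are genuinely linearly ordered, so there is nothing to prove for an \emph{individual} $L$; the subtlety is only that we are reading off this order from the position of $q$ rather than from $L$ itself, and one must note that for $\varepsilon$ sufficiently small the near point $q$ lies strictly inside a region of $\vo(\ca,p)$ (not on any of its hyperplanes) precisely because $L$ was chosen not parallel to any $H\in\ca$ and not meeting two hyperplanes of $\ca$ at a common point — exactly the admissibility conditions imposed in the definition of a valid ordering. Apart from this, the argument is a bookkeeping of signs, and I would present it compactly, citing the displayed discussion above the proposition rather than repeating the case analysis in full.
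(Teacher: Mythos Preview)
Your proposal is correct and follows precisely the approach of the paper: the paper offers no separate proof of this proposition, treating the paragraph immediately preceding it (the case analysis on pairs $H,H'$ and the role of the near point $q$) as the entire argument, and your plan is simply a careful formalization of that paragraph. The only addition you make---checking surjectivity by noting that each region of the central arrangement $\vo(\ca,p)$ is a cone with apex $p$ and hence meets every neighborhood of $p$---is a natural detail the paper leaves implicit.
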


We now wish to explain the connection between the valid order
arrangement and a matroidal construction known as ``Dilworth
truncation.'' Recall that a \emph{matroid} on a set $E$ may be defined
as a collection $\ci$ of subsets of $E$, called \emph{independent
  sets}, satisfying the following condition: for any subset
$F\subseteq E$, the maximal (under inclusion) sets in $\ci$ that
are contained in $F$ all have the same number of elements. The
protypical example of a matroid consists of a finite subset $E$ of a
vector space, where a set $F\subseteq E$ is independent if it is
linearly independent. For further information on matroid theory, see
for instance \cite{oxley}\cite{welsh}\cite{white1}.

We first define a matroid $M_\ca$ associated with an
arrangement. Given a real arrangement $\ca$ in a vector space $V$
which we identify with $\rr^d$, let $H$ be a hyperplane in $\ca$
defined by the equation $x\cdot \alpha=c$, where $0\neq
\alpha\in\rr^d$ and $c\in\rr$. Associate with $H$ the vector
$v_H=(\alpha,-c)\in\rr^{n+1}$. Let $M_\ca$ be the matroid
corresponding to the set $E_\ca=\{v_H\st H\in\ca\}$.  That is, the
points of $M_\ca$ are the vectors in $E_\ca$, with independence in
$M_\ca$ given by linearly independence of vectors. Note the $M_\ca$ is
a \emph{linear arrangement}, that is, all its hyperplanes pass through
the origin.

\textsc{Note.} Denote the coordinates in $\rr^{n+1}$ by
$x_1,\dots,x_n,y$. Preserving the notation from above, let $\sco(\ca)$
denote the set of all hyperplanes $\alpha\cdot x=cy$ in
$\rr^{n+1}$. We call $\sco(\ca)$ the \emph{semicone} of $\ca$. If we
add the additional hyperplane $y=0$, then we obtain the \emph{cone}
$c(\ca)$, as defined e.g.\ in \cite[{\S}1.1]{rs:hyp}. Note that
$\sco(\ca)$ is a linear arrangement satisfying $M_\ca\cong
M_{\sco(\ca)}$.

Now let $M$ be a matroid on a set $E$, and let $L=L_M$ denote the
lattice of flats of $M$. If we remove the top $k$ levels from $L$
below the maximum element $\hat{1}$, then we obtain the $k$th
\emph{truncation} $T^kL$ of $L$. It is easy to see that $T^kL$ is a
geometric lattice and hence the lattice of flats of a matroid. What
if, however, we remove the \emph{bottom} $k$ levels from $L$ above the
minimum element $\hat{0}$? In general, we do not obtain a geometric
lattice. We would like to ``fill in'' this lower truncation as
generically as possible to obtain a geometric lattice, without adding
any new atoms (elements of rank $k+1$ of $L$), and without increasing
the rank. This rather vague description was formalized by Dilworth
\cite{dil}. Three other references are Brylawski \cite{bry}\cite{bry2}
and Mason \cite{mason}. We will give the definition at the level of
matroids. Define the $k$th \emph{Dilworth truncation} $D_kM$ to be the
matroid on the set $\binom{E}{k+1}$ of $(k+1)$-element subsets of $E$,
with independent sets
    $$ \mathcal{I} =\left\{ I\subseteq \binom{E}{k+1}\st
   \mathrm{rank}_M\left(\bigcup_{p\in I'}p\right) \geq \#I'+k,\ \ 
   \forall\, \emptyset\neq I'\subseteq I\right\}. $$
Thus the flats of rank one of $D_kM$ are just the flats of rank $k+1$
of $M$. In particular, the flats of $D_1M$ are the lines (flats of
rank two) of $M$. We carry over the notation $D_k$ to geometric
lattices. In other words, if $L$ is a
geometric lattice, so $L=L_M$ for some matroid $M$, then we define
$D_kL = L_{D_kM}$. 

\textsc{Note.} Various other notations are used for $D_k$, including
$D_{k+1}$ and $T_{k+1}$.

In general, $D_1L$ seems to be an intractable object. For the boolean
algebra $B_m$ we have \cite[Thm.~3.2]{dil}\cite[p.~163]{mason}
  \beq D_1 B_m\cong \Pi_m, \label{eq:d1bm} \eeq
the lattice of partitions of an $m$-set (or the intersection lattice
of the braid arrangement $\mathcal{B}_m$), but for more complicated
geometric lattices $L$ it is difficult to describe $D_1L$ in a
reasonable way. If $L$ has rank two then clearly $D_1L$ consists of
just two points $\hat{0}$ and $\hat{1}$. If $L$ has
rank three then when we remove the atoms from $L$ we still have a
geometric lattice, so $D_1L$ consists just of $L$ with the atoms
removed. When $L$ has rank four, to obtain $D_1L$ first remove the atoms
from $L$ to obtain a lattice $L'$ of rank three. For any two atoms
$s,t$ of $L'$ whose join in $L'$ is the top element $\hat{1}$ of $L'$,
adjoin a new element $x_{st}$ covering $s$ and $t$ and covered by
$\hat{1}$. The resulting poset is $D_1L$. This construction allows us
to give a formula for the characteristic polynomial (e.g.,
\cite[{\S}1.3]{rs:hyp}\cite[{\S}3.11.2]{ec1}) of $D_1L$ when
rank$(L)=4$. Let $\rho_2$ be the number of elements of $L$ of rank
two, let $L_3$ be the set of elements of $L$ of rank three, and let
$c(t)$ be the number of elements $u$ covering $t\in L$, i.e., $u>t$,
and no element $v$ satisfies $u>v>t$). Then
  \beas \chi_{D_1L}(q) & = & q^3-\rho_2\,q^2+\left[
  \binom{\rho_2}{2} - \sum_{t\in L_3}\binom{c(t)-1}{2}\right]q\\
   & & \quad +\sum_{t\in L_3}\binom{c(t)-1}{2}-\binom{\rho_2-1}{2}. 
  \eeas
When rank$(L)=5$ the situation becomes much more complicated.

We now come to our main result on the valid order arrangement.

\begin{thm} \label{thm:dt}
Let $\ca$ be an arrangement in the real vector space $V$, and let $p$
be a generic point of $V$. Then $L_{\vo(\ca,p)}\cong L_{D_1(\ca)}$.
\end{thm}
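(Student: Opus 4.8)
The plan is to turn $\vo(\ca,p)$ into a central arrangement, read its hyperplanes off as sections of the semicone representation of $M_\ca$, and then recognize the configuration so obtained as the classical generic realization of the first Dilworth truncation. (Since $M_\ca\cong M_{\sco(\ca)}$, I read $D_1(\ca)$ as $D_1 M_{\sco(\ca)}$ and $L_{D_1(\ca)}$ as its lattice of flats.)

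First I would replace $\vo(\ca,p)$ by a matroid on normal vectors. Every hyperplane of $\vo(\ca,p)$ contains $p$, so the arrangement is central and its intersection lattice is the lattice of flats of the matroid $N_p$ on the normal vectors of its hyperplanes; thus it suffices to prove $N_p\cong D_1 M_\ca$ for generic $p$. Writing $H\in\ca$ as $\alpha_H\cdot x=c_H$, the semicone representation of $M_\ca$ is $v_H=(\alpha_H,-c_H)\in\rr^{d+1}$; set $t=(p,1)$, and note $t\cdot v_H=p\cdot\alpha_H-c_H\neq 0$ precisely because $p\notin H$. A direct linear-algebra computation --- handling the two kinds of hyperplane, $\aff(p,H\cap H')$ for a non-parallel pair and $\mathrm{par}(p,H)$ for a pair inside a parallel class, but arriving at a single formula --- shows that the hyperplane of $\vo(\ca,p)$ attached to a pair $\{H,H'\}$ of distinct members of $\ca$ has normal vector proportional to the first $d$ coordinates of $u_{\{H,H'\}}:=(t\cdot v_{H'})\,v_H-(t\cdot v_H)\,v_{H'}\in\rr^{d+1}$. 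Two observations then pin down $N_p$: all the $u_{\{H,H'\}}$ lie in the hyperplane $K_p:=\ker t$ (since $t\cdot u_{\{H,H'\}}=0$), and forgetting the last coordinate restricts to a linear isomorphism $K_p\to\rr^d$ (as $K_p$ is the graph of $\alpha\mapsto -p\cdot\alpha$). Hence $N_p$ is isomorphic to the matroid on $\{u_{\{H,H'\}}\}$; and since $u_{\{H,H'\}}$ spans $\mathrm{span}(v_H,v_{H'})\cap K_p$ --- a line, because $v_H\notin K_p$ forces $\mathrm{span}(v_H,v_{H'})\not\subseteq K_p$ --- that matroid is exactly the one obtained by intersecting the representation $\{v_H\}$ of $M_\ca$ with the hyperplane $K_p$.

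Next I would invoke the classical realization of the first Dilworth truncation (see \cite{bry2}, \cite{mason}): if $M$ is represented over an infinite field by vectors $\{v_e\}$ and $K$ is a \emph{generic} hyperplane through the origin, then $\{\mathrm{span}(v_a,v_b)\cap K\}$ represents $D_1 M$ (pairs with $v_a\parallel v_b$ being loops, of which there are none here). One inclusion of this statement is automatic and holds for every admissible $K$: if a flat $F$ of $M_\ca$ has rank $j$ then $\mathrm{span}(F)\cap K$ has rank at most $j-1$, so any $j$ of the section vectors coming from pairs inside $F$ are dependent, which is exactly the family of dependences built into the definition of $D_1 M_\ca$; the content of the statement is that for generic $K$ there are no others. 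To pass from ``generic $K$'' to ``generic $p$'', observe that as $p$ ranges over $\rr^d$ the hyperplane $K_p=\ker(p,1)$ ranges over the affine chart of all hyperplanes of $\rr^{d+1}$ transverse to the last coordinate axis, and $p\mapsto K_p$ is an isomorphism onto that chart; so the preimage of the dense Zariski-open locus of good hyperplanes is a dense Zariski-open set of points $p$. Putting the three steps together yields $L_{\vo(\ca,p)}\cong L_{N_p}\cong L_{D_1 M_\ca}=L_{D_1(\ca)}$.

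The main obstacle is the substantive half of the realization statement --- that a generic hyperplane section of the representation of $M_\ca$ creates \emph{only} the dependences required by $D_1 M_\ca$. If one wants a self-contained argument instead of quoting the Dilworth-truncation realization theorem, this amounts to showing that for every $I\subseteq\binom{E_\ca}{2}$ satisfying the rank inequalities that define independence in $D_1 M_\ca$, the corresponding maximal minor of $[\,u_e\,]_{e\in I}$ is not the zero polynomial in the coordinates of $p$; the cleanest routes to this non-vanishing are induction on $|I|$, stripping off one pair together with one fresh coordinate of $p$ at each stage, or the display of one explicit favorable $p$. The remaining ingredients --- the normal-vector computation and the reduction of genericity of $K_p$ to genericity of $p$ --- are routine.
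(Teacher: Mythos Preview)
Your proof is correct and follows the same route as the paper's: both reduce to the Brylawski--Mason realization of $D_1M$ as a generic hyperplane section of the line-spans of a linear representation of $M$. The paper's proof is a two-sentence citation that merely asserts this construction is ``precisely dual'' to the statement of the theorem; you have made that duality explicit by computing the normal vectors of $\vo(\ca,p)$ and identifying them with the section of the semicone representation $\{v_H\}$ by the hyperplane $K_p=\ker(p,1)$.
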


\begin{proof}
Brylawski \cite[p.~62]{bry}\cite[p.~197]{bry2} and Mason
\cite[pp.~161--162]{mason} note that the Dilworth truncation of a
geometry (simple matroid) $M$ embedded in a vector space $V$ of the
same dimension (over a sufficiently large field if the field
characteristic in nonzero) is obtained as the set of intersections of
the lines of $M$ with a generic hyperplane in $V$. This is precisely
dual to the statement of our theorem.
\end{proof}

As an example illustrating Theorem~\ref{thm:dt},
Figure~\ref{fig:voex}(a) shows an arrangement $\ca$ of four
hyperplanes (solid lines) in $\rr^2$ and a nongeneric point $p$. The
dashed lines are the hyperplanes in $\vo(\ca)$. The point $p$ is not
generic since the same hyperplane of $\vo(\ca)$ passes through the two
intersections marked $a$ and $b$. The arrangement $\vo(\ca)$ has ten
regions, so there are ten valid orderings of the four hyperplanes of
$\ca$ with respect to $p$. Figure~\ref{fig:voex}(b) shows the same
situation with a generic point $p$. There are now twelve valid
orderings with respect to $p$. In this case the lattice $L_\ca$ is an
(upper) truncated boolean algebra $T^1B_4$, with four atoms and six
elements of rank two. Since rank$(L_\ca)=3$ the Dilworth truncation
$D_1(L_\ca)$ is obtained simply by removing the atoms from $L_\ca$.

\begin{figure}
\centering
\centerline{\includegraphics[width=12cm]{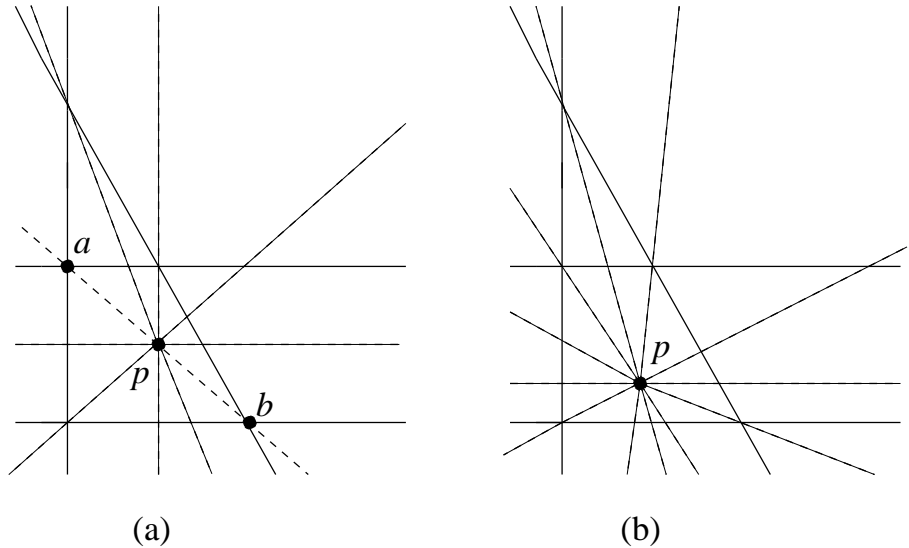}}
\caption{Two valid order arrangements} 
\label{fig:voex}
\end{figure}

\section{Examples}
As mentioned in the introduction, a special situation of interest
occurs when $\ca$ consists of the affine spans aff$(F)$ of the facets
$F$ of a $d$-dimensional convex polytope $\cp$ in $\rr^d$, is which
case we call $\ca$ the \emph{visibility arrangement} vis$(\cp)$ of
$\cp$. The regions of vis$(\cp)$ correspond to the sets of facets that
are visible (on the outside) from some point in $\rr^d$. In
particular, the interior of $\cp$ is a region from which \emph{no}
facets are visible.
Let
$v(\cp)=r(\vis(\cp))$, the number of regions of $\vis(\cp)$ or
visibility sets of facets of $\cp$.  If $p$ is a point inside $\cp$,
then the valid orderings $(\aff(F_1),\dots,\aff(F_r))$ with respect to
$p$ correspond to the line shellings $(F_1,\dots,F_r)$ where the
shelling line passes through $p$. For basic information on line
shellings, see Ziegler \cite[Lecture~8]{ziegler}.

For an arrangement $\ca$ in $\rr^d$, let $\chi_\ca(q)$ denote the
\emph{characteristic polynomial} of $\ca$ (e.g.,
\cite[{\S}1.3]{rs:hyp}\cite[{\S}3.11.2]{ec1}). A well-known theorem of
Zaslavsky \cite[Thm.~2.5]{rs:hyp}\cite[Thm.~3.11.7]{ec1} states that
the number $r(\ca)$ of regions of $\ca$ is given by 
   \beq r(\ca) = (-1)^d\chi_\ca(-1). \label{eq:zas} \eeq 
Suppose that $\ca$ is defined over $\zz$, that is, the equations
defining the hyperplanes in $\ca$ have integer coefficients. By taking
these coefficients modulo a prime $p$, we get an arrangement $\ca_p$
defined over the finite field $\ffp$. It is also well-known
\cite[Thm.~5.15]{rs:hyp}\cite[Thm.~3.11.10]{ec1} that for $p$
sufficiently large, 
   \beq \chi_\ca(p) = \#\left(\ffp^d-\bigcup_{H\in\ca_p}H\right).
    \label{eq:ff} \eeq
This result will be a useful tool below in computing some
characteristic polynomials. 

We now discuss two examples, the $n$-cube and the order polytope of a
finite poset.  Let $\calc_n$ denote the standard $n$-dimensional cube,
given by the inequalities $0\leq x_i\leq 1$, for $1\leq i\leq n$. It
is easy to see, e.g., by equation~\eqref{eq:ff},  that the visibility
arrangement vis$(\calc_n)$ satisfies
  $$ \chi_{\vis(\calc_n)}(q) = (q-2)^n. $$
In particular, $r(\vis(\calc_n)) = 3^n$. Drawing a picture for $n=2$
will make it geometrically clear why $\vis(\calc_n)$ has $3^n$
regions. In fact, the facets of $\calc_n$ come in $n$ antipodal pairs
$F$ and $\bar{F}$. The sets of facets visible from some point are
obtained by choosing for each pair $F,\bar{F}$ either $F$, $\bar{F}$,
or neither. There are three choices for each pair, so $3^n$ visibility
sets in all.

More interesting are the line shellings of cubes.
We summarize some information in the following result.

\begin{thm} \label{thm:cube}
  \be\item[(a)] Let $p=\left(\frac 12,\frac 12,\dots,
\frac 12\right)$, the center of the cube $\calc_n$. Then
  $$ \chi_{\vo(\vis(\calc_n),p)}(q) = (q-1)(q-3)\cdots (q-(2n-1)), $$ 
so the number of line shellings with respect to $p$ is $2^nn!$.
  \item[(b)] The \emph{total} number of line shellings of $\calc_n$ is
    $2^nn!^2$. 
  \item[(c)] Let $f(n)$ denote the total number of shellings of
    $\calc_n$. Then
   \beq \sum_{n\geq 1}f(n)\frac{x^n}{n!} =1-\frac{1}{\sum_{n\geq 0}
      (2n)!\frac{x^n}{n!}}. \label{eq:cubesh} \eeq
   \item[(d)] Every shelling of $\calc_n$ can be realized as a
     corresponding line
     shelling of a polytope combinatorially equivalent to $\calc_n$. 
 \ee
\end{thm}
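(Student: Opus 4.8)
The plan is to identify $\vo(\vis(\calc_n),p)$ explicitly. Here $\vis(\calc_n)$ consists of the $n$ parallel pairs $\{x_i=0\},\{x_i=1\}$. For the center $p=(\tfrac12,\dots,\tfrac12)$ and two non-parallel facet hyperplanes $\{x_i=a\},\{x_j=b\}$ ($i\neq j$), a direct computation of the affine span gives the hyperplane $\{x_i=x_j\}$ when $a=b$ and $\{x_i+x_j=1\}$ when $a\neq b$, while $\mathrm{par}(p,\cdot)$ for the pair $\{x_i=0\},\{x_i=1\}$ is $\{x_i=\tfrac12\}$. Under $y_i=x_i-\tfrac12$ these become $y_i=y_j$, $y_i=-y_j$, $y_i=0$, i.e.\ the Coxeter arrangement of type $B_n$, whose characteristic polynomial is the classical product $(q-1)(q-3)\cdots(q-(2n-1))$ (see \cite{rs:hyp}). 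Combining this with Zaslavsky's formula~\eqref{eq:zas} and the identification of valid orderings with the regions of $\vo(\vis(\calc_n),p)$, the number of line shellings through $p$ is $(-1)^n\prod_{i=1}^n\big(-1-(2i-1)\big)=2^nn!$. The one place to be careful is the case distinction in the affine-span computation.

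\textbf{Part (b).} Here I would parametrize line shellings directly. Write $F_i^0=\{x_i=0\}\cap\calc_n$, $F_i^1=\{x_i=1\}\cap\calc_n$ and take a generic directed line $L=\{p_0+tv\}$ with $p_0\in\mathrm{int}(\calc_n)$. It meets $F_i^0$ at $t=-p_{0,i}/v_i$ and $F_i^1$ at $t=(1-p_{0,i})/v_i$; since $0<p_{0,i}<1$, exactly one of these is positive --- the one for the facet of the $i$th pair lying on the $v$-side --- and the other negative. By the Bruggesser--Mani description (\cite[Lecture~8]{ziegler}), the shelling given by $L$ lists the positive-parameter facets in increasing order, then the negative-parameter facets in increasing order. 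Hence a line shelling of $\calc_n$ amounts exactly to: a choice $\epsilon\in\{0,1\}^n$ of which facet of each pair is on the $v$-side ($2^n$ choices), an order $\sigma\in\sn$ of those $n$ facets, and an order $\tau\in\sn$ of the opposite $n$ facets. For the converse I would note that, for each $i$, the positive parameter and the size of the negative parameter can be prescribed arbitrarily and independently (fix their ratio by the choice of $p_{0,i}\in(0,1)$, then rescale $v_i$), so every triple $(\epsilon,\sigma,\tau)$ is realized; and the map is injective because the first $n$ entries of the shelling recover $\epsilon$ and $\sigma$ and the last $n$ recover $\tau$. Thus the total is $2^n\,n!\cdot n!=2^nn!^2$.

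\textbf{Part (c).} Set $A(x)=\sum_{n\ge0}(2n)!\,x^n/n!$ and $F(x)=\sum_{n\ge1}f(n)x^n/n!$. The asserted identity is equivalent to $A(x)\big(1-F(x)\big)=1$, i.e.\ $A=\sum_{j\ge0}F^{\,j}$, which says that an arbitrary linear ordering of the $2n$ facets of $\calc_n$ (there are $(2n)!$ of them) should decompose canonically into an ordered set partition $(B_1,\dots,B_j)$ of the coordinate set together with, for each block, a shelling of the coordinate subcube on those coordinates. I would produce this decomposition greedily: let $s_1$ be the \emph{longest} initial segment of the ordering that consists of exactly the facets of some coordinate subcube $\calc_B$ and whose induced order shells $\calc_B$; set $B_1=B$ and recurse on the remaining facets. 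Two points need proof. First, every ordering of the facets of a cube has a nonempty initial segment of this type, so the recursion exhausts the coordinates; this is a combinatorial fact about shellings of cubes. Second --- the geometric heart --- the greedy $s_1$ can never be lengthened: if $s$ shells $\calc_B$ and $G$ is any facet whose coordinate is not in $B$, then $G\cap\big(\bigcup s\big)$ equals $\partial\calc_B\times\calc_D$ for the appropriate coordinate set $D$, which is homotopy equivalent to the nonempty sphere $S^{\,|B|-1}$ and hence not a (shellable) ball, whereas the start of a shelling of $\partial G$ must be a ball; so appending $G$ to $s$ cannot continue a shelling of any larger cube. Once the greedy decomposition is seen to be a bijection, comparing coefficients yields~\eqref{eq:cubesh}. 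I expect the first point (existence of the greedy block) to be the fussier of the two.

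\textbf{Part (d).} The idea is to realize a given shelling $\sigma=(G_1,\dots,G_{2n})$ of $\calc_n$ as a line shelling by reverse-engineering the Bruggesser--Mani picture, using the freedom to deform $\calc_n$ within its combinatorial type (projective images of $\calc_n$, for instance, have facet hyperplanes in essentially general position). Concretely I would seek a combinatorial cube $\cp$, together with a directed line $\ell$ through $\mathrm{int}(\cp)$, such that $\ell$ meets the facet hyperplane of $\cp$ corresponding to $G_1$ at a small positive parameter and meets those corresponding to $G_2,\dots,G_{2n}$ at negative parameters in increasing order; the Bruggesser--Mani shelling of $\cp$ along $\ell$, begun at $\ell\cap\mathrm{int}(\cp)$, is then exactly $\sigma$. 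The work is to choose the normals and offsets of the $2n$ facet hyperplanes, together with $\ell$, so that all of these ordering conditions hold at once while the face lattice stays that of the cube; I expect this simultaneous control of the combinatorial type and of the full list of $2n$ crossing parameters to be the main obstacle. A natural alternative is induction on $n$ via the prism decomposition $\calc_n=\calc_{n-1}\times[0,1]$: analyze how $\sigma$ interacts with the ``side'' facets, realize the induced shelling data on $\calc_{n-1}$ by the inductive hypothesis, and lift.
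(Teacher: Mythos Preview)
Your arguments for (a) and (b) are correct and essentially identical to the paper's: the paper also identifies $\vo(\vis(\calc_n),p)$ with the type $B_n$ Coxeter arrangement, and for (b) it parametrizes the shelling line as $(a_1,\dots,a_n)+t(\alpha_1,\dots,\alpha_n)$ and shows directly (by choosing the $b_i=-a_i/\alpha_i$ first and then the $\alpha_i$) that the last $n$ crossing times can be put in any order once the first $n$ are fixed---exactly your ``prescribe the two parameters independently'' observation.

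For (c) your bijective decomposition is correct but more elaborate than the paper's route. The paper isolates the single combinatorial fact that underlies both of your ``points'': an ordering $F_1,\dots,F_{2n}$ of the facets of $\calc_n$ is a shelling if and only if for no $1\le j<n$ do $\{F_1,\dots,F_{2j}\}$ consist of $j$ antipodal pairs. From this the recurrence
\[
(2n)!=\sum_{j=0}^{n}\binom{n}{j}f(j)\,(2n-2j)!
\]
is immediate (for any ordering, let $j$ be the least index at which the first $2j$ facets are a union of antipodal pairs; then those $2j$ facets shell a $j$-subcube), and the generating function follows. Your greedy block $B_1$ is exactly this least-$j$ block, your ``second point'' is the ``only if'' direction of the characterization, and your ``first point'' is its ``if'' direction; so the two arguments have the same content, but the paper's formulation lets you skip the explicit bijection and go straight to the recurrence.

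For (d) the paper gives no argument at all; it simply cites Develin's thesis \cite[Cor.~2.12]{dev}. Your projective-deformation and prism-induction sketches are reasonable starting points, but as you note, simultaneously controlling the combinatorial type and all $2n$ crossing parameters is the real issue, and neither sketch resolves it.
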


\begin{proof}
 \be\item[(a)] The hyperplanes of $\vo(\vis(\calc_n))$ are given by
 $x_i=0$, $1\leq i\leq n$, and $x_i\pm x_j=0$, $1\leq i<j\leq 1$.The
 characteristic polynomial can now easily be computed from
 \eqref{eq:ff}. Alternatively, $\vo(\vis(\calc_n))$ is the
 \emph{Coxeter arrangement} of type $B_n$, whose characteristic
 polynomial is well-known
 \cite[p.~451]{rs:hyp}\cite[Exer.~3.115(d)]{ec1}. 
  \item[(b)] If we stand at a generic point far away from $\calc_n$ we
    will see $n$ facets of $\calc_n$, all with a common vertex $v$. By
    symmetry, there are $2^n$ choices for $v$, and then $n!$ orderings
    of the $n$ facets containing $v$ that can begin a line
    shelling $\sigma$. Hence it remains to prove that the remaining
    $n$ facets can come in any order in $\sigma$.
 
    Let the parametric equation of the line $L$ defining the shelling
    be $(a_1,a_2,\dots,a_n)+t(\alpha_1,\alpha_2,\dots,\alpha_n)$,
    where $t\in\rr$. Making a small perturbation if necessary, we may
    assume that each $\alpha_i\neq 0$. We may also assume by
    symmetry that the facet $F_i$ of the shelling, for
    $1\leq i\leq n$, has the equation $x_i=0$. The line $L$ intersects
    the hyperplane $x_i=0$ when $t=-a_i/\alpha_i$, so
      $$ \frac{a_1}{\alpha_1}>\frac{a_2}{\alpha_2}>\cdots>
        \frac{a_n}{\alpha_n}. $$
    The line $L$ intersects the hyperplane $x_i=1$ when
    $t=(1-a_i)/\alpha_i$. Write 
     $$ \frac{1-a_i}{\alpha_i} = \frac{1}{\alpha_i}+b_i, $$ 
    so $b_1<b_2<\cdots<b_n$. Thus we can first choose
    $b_1<b_2<\cdots<b_n$. Then choose
    $\alpha_1,\alpha_2,\dots,\alpha_n$ so that the numbers
    $\frac{1}{\alpha_i}+b_i$ come in any desired order. This then
    determines $a_1,\dots, a_n$ uniquely, completing the proof.
  \item[(c)] This result is stated without proof in
    \cite[Exer.~1.131]{ec1}. To prove it, note that $F_1,F_2,\dots,
    F_{2n}$ is a shelling if and only if for no $1\leq j<n$ is it true
    that $\{F_1,F_2,\dots, F_{2j}\}$ consists of $j$ pairs of
    antipodal facets. There follows the recurrence
   $$ (2n)! =\sum_{j=0}^n f(j)\binom nj(2n-2j)!, $$
 from which equation~\eqref{eq:cubesh} is immediate. 
  \item[(d)] See M. L. Develin \cite[Cor.~2.12]{dev}.
 \ee
\end{proof}

Conspicuously absent from Theorem~\ref{thm:cube} is the characteristic
polynomial or number of regions of the line shelling arrangement
vo$(\vis(\calc_n),p)$ when $p$ is \emph{generic}, the situation of
Theorem~\ref{thm:dt}. Suppose for instance that $n=3$. Let $\ca(p)=
\vo(\vis(\calc_3),p)$. When $p=\left(
\frac 12,\frac 12,\frac 12\right)$, then by Theorem~\ref{thm:cube}(a)
we have 
  $$ \chi_{\ca(p)}(q) = (q-1)(q-3)(q-5),\ \ r(\ca)=48. $$
For $p=\left(\frac 12,\frac 12,\frac 14\right)$ we have
  $$ \chi_{\ca(p)}(q) = (q-1)(q-5)(q-7),\ \ r(\ca)=96. $$
For generic $p$ we have
  $$ \chi_{\ca(p)}(q) = (q-1)(q^2-14q+53),\ \ r(\ca)=136=2^3\cdot 17. 
  $$
The total number of line shellings of $\calc_3$ is 288, and the total
number of shellings in 480. While the Dilworth truncation
$D_1(\vis(\calc_n))$ seems quite complicated, it might not be hopeless
to compute its characteristic polynomial or number of regions. We
leave this as an open problem.

We next consider the order polytope $\orp$ of a finite poset $P$,
first defined explicitly in \cite{op}. By definition, $\orp$ is the
set of all order-preserving maps $\tau\colon P\to [0,1]$ and is hence
a convex polytope in the space $\rr^P$ of all maps $P\to \rr$. Our
main result will be a connection between the number of regions of
$\vis(\orp)$, i.e., the number of visibility sets of facets of $\orp$,
and a certain generalization of the chromatic polynomial of a graph.

Let $G$ be a finite simple (i.e., no loops or multiple edges) graph
with vertex set $V$. Recall that a \emph{proper coloring} of $G$ with
colors from the set $\pp$ of positive integers is a map $f\colon
V\to \pp$ such that if $u$ and $v$ are adjacent in $G$ then
$f(u)\neq f(v)$. The \emph{chromatic polynomial} $\chi_G(q)$
is defined when $q\in\pp$ to be the number of proper colorings
$f\colon V\to \{1,2,\dots,q\}$. It is a standard result that
$\chi_G(q)$ is a polynomial in $q$. Moreover, if
$V=\{v_1,\dots,v_p\}$, then define the \emph{graphical arrangement}
$\ca_G$ to be the arrangement in $\rr^p$ with hyperplanes $x_i=x_j$,
where $v_i$ and $v_j$ are adjacent vertices of $G$. Then
$\chi_{\ca_G}(q) = \ca_G(q)$
\cite[Thm.~2.7]{rs:hyp}\cite[Exer.~3.108]{ec1}. 

We will generalize the definition of $\chi_G(q)$ by imposing finitely
many disallowed colors at each vertex. More precisely, let $2^\pp$
denote the set of all subsets of $\pp$, and let $\psi\colon V\to
2^\pp$ satisfy $\#\psi(v)<\infty$ for all $v\in V$. For $q\in\pp$,
define $\chi_{G,\psi}(q)$ to be the number of proper colorings
$f\colon V\to \{1,2,\dots,q\}$ such that $f(v)\not\in\psi(v)$ for all
$v\in V$. Thus for each vertex $v$, there is a finite set $\psi(v)$
of ``disallowed colors.'' We call such a coloring a
\emph{$\psi$-coloring}. The idea of permitting only certain colors
of each vertex in a proper coloring of $G$ has received much attention
in the context of \emph{list colorings} \cite{list}, but the function
$\chi_{G,\psi}(q)$ seems to be new.

It is easy to see that $\chi_{G,\psi}(q)$ is a monic polynomial in
$q$ of degree $p$ with integer coefficients. We call it the
\emph{$\psi$-chromatic polynomial} of $G$. Define the
\emph{$\psi$-graphical arrangement} $\ca_{G,\psi}$ to be the
arrangement in $\rr^p$ with hyperplanes $x_i=x_j$ whenever $v_i$ and
$v_j$ are adjacent in $V$, together with $x_i=\alpha_j$ if
$\alpha_j\in\psi(v_i)$.

\begin{thm} \label{thm:sigarr}
We have $\chi_{\ca_{G,\psi}}(q)=\chi_{G,\psi}(q)$, that is, the
$\psi$-chromatic polynomial of $G$ coincides with the characteristic
polynomial of the $\psi$-graphical arrangement $\ca_{G,\psi}$. 
\end{thm}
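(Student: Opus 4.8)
The plan is to compute $\chi_{\ca_{G,\psi}}(q)$ by the finite field method of equation~\eqref{eq:ff} and to match the answer against the $\psi$-chromatic polynomial by a direct counting bijection. Both $\chi_{\ca_{G,\psi}}(q)$ and $\chi_{G,\psi}(q)$ are polynomials in $q$ --- the former by the general theory of arrangements, the latter by the observation recorded just before the theorem --- so it is enough to show that they agree at infinitely many integers $q$, and I would do this at all sufficiently large primes.

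First I would note that $\ca_{G,\psi}$ is defined over $\zz$: its hyperplanes $x_i-x_j=0$ (one for each edge $v_iv_j$ of $G$) and $x_i-\alpha_j=0$ (one for each $\alpha_j\in\psi(v_i)$) have integer coefficients. Set $N=\max\bigl(\bigcup_{v\in V}\psi(v)\bigr)$, which is finite since $V$ is finite and each $\psi(v)$ is finite, and let $\ell$ be any prime with $\ell>N$ that is also large enough for~\eqref{eq:ff} to apply to $\ca_{G,\psi}$. Identifying $\mathbb{F}_\ell$ with $\{0,1,\dots,\ell-1\}$, every element $\alpha_j$ of some $\psi(v_i)$ lies in $\{1,\dots,\ell-1\}$ and hence reduces to itself modulo $\ell$; in particular no disallowed color reduces to $0$.

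By~\eqref{eq:ff}, $\chi_{\ca_{G,\psi}}(\ell)$ equals the number of tuples $(c_1,\dots,c_p)\in\mathbb{F}_\ell^{\,p}$ (where $p=\#V$) with $c_i\neq c_j$ whenever $v_iv_j$ is an edge of $G$ and $c_i\neq\alpha_j$ whenever $\alpha_j\in\psi(v_i)$. I would then exhibit a bijection between these tuples and the $\psi$-colorings $f\colon V\to\{1,\dots,\ell\}$: let $\phi\colon\{1,\dots,\ell\}\to\mathbb{F}_\ell$ be the bijection fixing $1,\dots,\ell-1$ and sending $\ell$ to $0$, and associate to $f$ the tuple $(\phi(f(v_1)),\dots,\phi(f(v_p)))$. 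Since $\phi$ is injective, the edge constraints $c_i\neq c_j$ hold exactly when $f$ is a proper coloring of $G$; and since every $\alpha_j\in\psi(v_i)$ satisfies $1\le\alpha_j\le N<\ell$, one has $\phi(f(v_i))=\alpha_j$ if and only if $f(v_i)=\alpha_j$, so the constraints $c_i\neq\alpha_j$ hold exactly when $f(v)\notin\psi(v)$ for all $v$. Hence the point count in~\eqref{eq:ff} equals $\chi_{G,\psi}(\ell)$, and since this holds for all sufficiently large primes $\ell$, the two polynomials coincide.

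The only delicate point --- the ``main obstacle,'' such as it is --- is aligning the bijection between $\{1,\dots,\ell\}$ and $\mathbb{F}_\ell$ with the two families of constraints: one must know that the color $\ell$, which $\phi$ relabels as $0\in\mathbb{F}_\ell$, is never disallowed at any vertex, which is precisely why $\ell$ is taken larger than every element of $\bigcup_v\psi(v)$. (One also wants $\ell$ large so that the distinct integers $\alpha_j$ stay distinct modulo $\ell$ and $\ca_{G,\psi}$ reduces without degeneracy, but in fact even accidental coincidences among the reduced hyperplanes would not affect the point count underlying~\eqref{eq:ff}.) Everything else is a routine application of the finite field method.
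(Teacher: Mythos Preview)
Your argument is correct and is exactly the paper's approach: the paper simply says the result is an immediate consequence of equation~\eqref{eq:ff}, and you have spelled out the details of that consequence. Your care in aligning $\{1,\dots,\ell\}$ with $\mathbb{F}_\ell$ so that no disallowed color maps to $0$ is appropriate but is precisely the routine bookkeeping the paper suppresses.
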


\begin{proof}
The proof is an immediate consequence of equation~\eqref{eq:ff}. 
\end{proof}

Because $\chi_{G,\psi}$ is the characteristic polynomial of a
hyperplane arrangement, it satisfies all the properties of such
polynomials. For instance, there is a deletion-contraction recurrence,
a broken circuit theorem, an extension to the Tutte polynomial,
etc. We now give the connection between $\vis(\orp)$ and
$\psi$-graphical arrangements.

\begin{thm} \label{thm:vop}
Let $P$ be a finite poset, and let $H$ denote the Hasse diagram of
$P$, considered as a graph with vertex set $V$. Define $\psi\colon
V\to \pp$ by
    $$ \psi(v) = \left\{ \begin{array}{rl} \{1,2\}, &
    \mathrm{if}\ v\ \mathrm{is\ an\ isolated\ point}\\
    \{1\}, &
    \mathrm{if}\ v\ \mathrm{is\ minimal\ but\ not\ maximal}\\ 
    \{2\}, & \mathrm{if}\ v\ \mathrm{is\ maximal\ but\ not\ minimal}\\
     \emptyset, & \mathrm{otherwise}. \end{array} \right. $$
Then $\vis(\orp)+(1,1,\dots,1)=\ca_{H,\psi}$, where
$\vis(\orp)+(1,1,\dots,1)$ denotes the translation of  $\vis(\orp)$ by
the vector $(1,1,\dots,1)$.
\end{thm}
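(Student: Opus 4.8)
The plan is to reduce the statement to the known description of the facets of the order polytope and then simply match up lists of hyperplanes. Note that the point $p$ plays no role here, since the assertion concerns the visibility arrangement $\vis(\orp)$ itself rather than any valid ordering.

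First I would recall from \cite{op} that the facet-defining inequalities of $\orp\subseteq\rr^P$ are exactly the following: $x_v\geq 0$ for each minimal element $v$ of $P$; $x_v\leq 1$ for each maximal element $v$; and $x_u\leq x_v$ for each covering relation $u\lessdot v$, equivalently for each edge $\{u,v\}$ of the Hasse diagram $H$. Consequently $\vis(\orp)$, the arrangement of affine spans of the facets of $\orp$, consists precisely of the hyperplanes $x_v=0$ ($v$ minimal), $x_v=1$ ($v$ maximal), and $x_u=x_v$ ($\{u,v\}$ an edge of $H$).

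Next I would translate by $(1,1,\dots,1)$: a hyperplane $\{a\cdot x=b\}$ passes to $\{a\cdot x=b+a\cdot(1,\dots,1)\}$, so $x_v=0$ becomes $x_v=1$, $x_v=1$ becomes $x_v=2$, and each edge hyperplane $x_u=x_v$ is unchanged. Thus $\vis(\orp)+(1,\dots,1)$ has hyperplanes $x_v=1$ for $v$ minimal, $x_v=2$ for $v$ maximal, and $x_u=x_v$ for every edge of $H$. Finally I would compare with $\ca_{H,\psi}$: its edge hyperplanes $x_u=x_v$ coincide with those just found, and for each $v$ it contributes $x_v=\alpha$ for every $\alpha\in\psi(v)$. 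The case analysis defining $\psi$ is exhaustive, noting that an element is an isolated vertex of $H$ exactly when it is both minimal and maximal: an isolated $v$ contributes $x_v=1$ and $x_v=2$; a minimal non-maximal $v$ contributes $x_v=1$; a maximal non-minimal $v$ contributes $x_v=2$; and a $v$ that is neither contributes nothing. Summing over $v$, the non-edge hyperplanes of $\ca_{H,\psi}$ are exactly $\{x_v=1:v\ \mathrm{minimal}\}\cup\{x_v=2:v\ \mathrm{maximal}\}$, so the two arrangements have identical hyperplane sets and hence are equal.

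The only non-routine ingredient is the facet description of $\orp$, that is, knowing that precisely the covering relations (not all order relations) yield facets and that the bounds $x_v\geq 0$, $x_v\leq 1$ are facet-defining exactly when $v$ is minimal, respectively maximal. This is the content of \cite{op}; its verification amounts to checking that the listed inequalities already imply $x_w\leq x_y$ for every relation $w<y$ (by transitivity along a saturated chain) and $0\leq x_v\leq 1$ for internal $v$ (by chaining down to a minimal element and up to a maximal one), and that each listed inequality is irredundant, i.e., the face it determines has dimension $|P|-1$. Granting this, the theorem is an immediate bookkeeping check, valid for every finite poset, including those with isolated points.
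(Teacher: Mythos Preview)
Your proof is correct and follows exactly the same approach as the paper: recall the facet description of $\orp$ from \cite{op}, translate by $(1,\dots,1)$, and match the resulting hyperplanes with those of $\ca_{H,\psi}$. The paper's own proof is essentially a one-line invocation of this facet description, so your write-up is simply a more detailed version of the same argument.
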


\begin{proof}
The result is an immediate consequence of the relevant
definitions. Namely, if $V=\{v_1,\dots,v_p\}$ then the facets of
$\orp$ are given by
  $$ \begin{array}{llrl} x_i & = & x_j, &
    \mathrm{if}\ v_j\ \mathrm{covers}\ v_i\  
     \mathrm{in}\ P\\
        x_i & = & 0, &
        \mathrm{if}\ x_i\ \mathrm{is\ a\ minimal\ element\ of}\ P\\ 
        x_i & = & 1, &
        \mathrm{if}\ x_i\ \mathrm{is\ a\ maximal\ element\ of}\ P,
   \end{array} $$
and the proof follows.  
\end{proof}

\textsc{Note.} We could have avoided the translation by
$(1,1,\dots,1)$ by allowing 0 to be a color, but it is more natural in
many situations to let the set of colors be $\pp$.

A curious result arises when $P$ is graded of rank one, i.e., every
maximal chain of $P$ has two elements. For $W\subseteq V$, let $H_W$
be the restriction of $H$ to $W$, or in other words, the induced
subgraph on the vertex set $W$.

\begin{thm} \label{thm:chim3}
Suppose that $P$ is graded of rank one. Then 
  \bea \chi_{\vis(\orp)}(q) & = & \sum_{W\subseteq V}
  \chi_{H_W}(q-2) \label{eq:chirank1}\\[.5em]  
   v(\mathcal{O}(P)) & = & (-1)^{\#P}\sum_{W\subseteq V} \chi_{H_W}(-3).
   \label{eq:rrank1}
 \eea
\end{thm}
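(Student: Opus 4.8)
The plan is to work from Theorem~\ref{thm:vop}, which identifies $\vis(\orp)$ (after translation) with the $\psi$-graphical arrangement $\ca_{H,\psi}$, and then to compute the characteristic polynomial of $\ca_{H,\psi}$ directly using the finite-field method \eqref{eq:ff}. Since translation does not change the characteristic polynomial, we have $\chi_{\vis(\orp)}(q)=\chi_{\ca_{H,\psi}}(q)$, and by Theorem~\ref{thm:sigarr} this equals $\chi_{H,\psi}(q)$, the number of $\psi$-colorings $f\colon V\to\{1,\dots,q\}$. So \eqref{eq:rrank1} will follow from \eqref{eq:chirank1} by Zaslavsky's theorem \eqref{eq:zas}, and the whole problem reduces to proving the identity \eqref{eq:chirank1} for $\chi_{H,\psi}(q)$ when $P$ is graded of rank one.

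First I would unpack what $\psi$ looks like in the rank-one case. Because $P$ is graded of rank one, every element is either minimal or maximal (and not both, unless it is isolated, which for a rank-one graded poset cannot happen unless the element is its own component — so in fact there are no isolated points, and the Hasse diagram $H$ is bipartite with parts $A$ = minimal elements and $B$ = maximal elements). Thus $\psi(v)=\{1\}$ for $v\in A$ and $\psi(v)=\{2\}$ for $v\in B$. A $\psi$-coloring is then a proper coloring $f$ of $H$ with colors in $\{1,\dots,q\}$ avoiding color $1$ on $A$ and color $2$ on $B$. The plan is to count these by inclusion–exclusion on which vertices are "forced back" to their forbidden value — more precisely, I would count proper colorings of $H$ avoiding color $1$ on $A$ and color $2$ on $B$ by relating them to ordinary proper colorings of induced subgraphs.

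The key combinatorial step: given an ordinary proper coloring $g$ of $H$ with colors $\{1,\dots,q-2\}$, relabel by a shift so that colors $1$ and $2$ become available as "escape values." Concretely, I would set up a bijection between pairs $(W, g)$, where $W\subseteq V$ and $g$ is a proper coloring of the induced subgraph $H_{V\setminus W}$ with the "middle" $q-2$ colors $\{3,\dots,q\}$ (equivalently $q-2$ colors after shifting), extended by assigning to each vertex of $W\cap A$ the color $1$ and each vertex of $W\cap B$ the color $2$ — and genuine $\psi$-colorings of $H$. The point is that in a $\psi$-coloring, let $W$ be the set of vertices receiving color $1$ or $2$; a vertex in $A$ with color $2$ or a vertex in $B$ with color $1$ is allowed, so this needs care. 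The cleanest version: let $W=\{v\in A: f(v)=2\}\cup\{v\in B: f(v)=1\}$ be the set of vertices using the "other" escape color; on $V\setminus W$ the coloring uses colors from $\{1,\dots,q\}$ but vertices of $A\setminus W$ avoid $1$ and vertices of $B\setminus W$ avoid $2$, and moreover $f$ restricted to $V\setminus W$ together with the induced adjacencies must still be proper — one checks that since all of $W$ has been colored with forced values on a bipartite graph consistently, the restriction to $H_{V\setminus W}$ can use all $q$ colors except that $A$-vertices avoid $\{1\}$ and $B$-vertices avoid $\{2\}$ — this doesn't immediately collapse.

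So the honest main obstacle is producing the precise bijection realizing the sum $\sum_{W\subseteq V}\chi_{H_W}(q-2)$; I expect the correct statement is that the right side counts $\psi$-colorings by letting $W$ be the set of vertices assigned a color \emph{not} in $\{1,2\}$, so that $H_W$ gets a proper coloring with the $q-2$ colors $\{3,\dots,q\}$, contributing $\chi_{H_W}(q-2)$, while each vertex of $V\setminus W$ in $A$ is forced to color $2$ and each in $B$ to color $1$ — but then properness across edges between $A\cap(V\setminus W)$ and $B\cap(V\setminus W)$ is automatic ($2\neq 1$), and properness of edges between $W$ and $V\setminus W$ must be checked separately, which is where the argument has real content and where I would be most careful. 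Once that bijection is nailed down, \eqref{eq:chirank1} is immediate, and \eqref{eq:rrank1} follows by substituting $q=-1$ into \eqref{eq:chirank1}, so that $\chi_{H_W}(-1-2)=\chi_{H_W}(-3)$, and applying \eqref{eq:zas} with $d=\#P$ (the ambient dimension being $\rr^P$), giving $v(\orp)=r(\vis(\orp))=(-1)^{\#P}\chi_{\vis(\orp)}(-1)=(-1)^{\#P}\sum_{W\subseteq V}\chi_{H_W}(-3)$.
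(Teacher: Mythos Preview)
Your proposal is correct and follows exactly the paper's approach: take $W$ to be the set of vertices receiving a color in $\{3,\dots,q\}$, force each minimal vertex outside $W$ to color $2$ and each maximal vertex outside $W$ to color $1$, and note that the restriction to $H_W$ is an arbitrary proper $(q-2)$-coloring. Your worry about properness on edges between $W$ and $V\setminus W$ is unfounded---those endpoints use colors from the disjoint sets $\{3,\dots,q\}$ and $\{1,2\}$, so no collision is possible and the bijection is immediate.
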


\begin{proof}
Let $q\geq 2$. Choose a subset $W\subseteq V$. Color each minimal
element of $P$ not in $W$ 
with the color 2, and color each maximal element of $P$ not in $W$
with the color 1. Color the remaining elements with the colors
$\{3,4,\dots,q\}$ in $\chi_{H_W}(q-2)$ ways. This produces each
  $\psi$-coloring of $H$, so the proof of
  equation~\eqref{eq:chirank1} follows. To obtain
  equation~\eqref{eq:rrank1}, put $q=-1$ in \eqref{eq:chirank1}.
\end{proof}

As an example, let $P_{mn}$ denote the poset of rank one with $m$
minimal elements, $n$ maximal elements, and $u<v$ for every minimal
element $u$ and maximal element $v$. Hence $H$ is the complete
bipartite graph $K_{mn}$. It is known \cite[Exer.~5.6]{ec2} that
 $$ \sum_{m\geq 0}\sum_{n\geq 0}\chi_{K_{mn}}(q)\frac{x^m}{m!}
   \frac{x^n}{n!} = (e^x+e^y-1)^q. $$
By simple properties of exponential generating functions we get
  $$ \sum_{m\geq 0}\sum_{n\geq 0}\chi_{\vis(\mathcal{O}(P_{mn}))}(q)
   \frac{x^m}{m!}\frac{x^n}{n!} =e^{x+y} (e^x+e^y-1)^{q-2} $$
and
  \beas \sum_{m\geq 0}\sum_{n\geq 0}v(\mathcal{O}(P_{mn}))
   \frac{x^m}{m!}\frac{x^n}{n!} & = & e^{-x-y}
   (e^{-x}+e^{-y}-1)^{-3}\\
     & = & 1+2(x+y)+7xy+4\frac{x^2+y^2}{2!}+
        23\frac{x^2y+xy^2}{2!}\\ & & \ +115\frac{x^2y^2}{2!^2}
        \ +8\frac{x^3+y^3}{3!}+73\frac{x^3y+xy^3}{3!}\\ & & \
    +533\frac{x^3y^2+x^2y^3}{2!\,3!}+3451\frac{x^3y^3}{3!^2}+\cdots.
   \eeas
For instance, the order polytope of $P_{22}$ has eight facets and 115
visibility sets of facets.

We now pose the question of extending some results on graphical
arrangements to $\psi$-graphical arrangements. An arrangement
$\ca$ is \emph{supersolvable} if the intersection lattice $L_{c(\ca)}$
of the cone $c(\ca)$ contains a maximal chain of 
modular elements. See for instance \cite{rs:hyp} for further
details. If $\ca$ is supersolvable, then every zero of $\chi_\ca(q)$
is a nonnegative integer. A graphical arrangement $\ca_G$ is
supersolvable if and only if $G$ is a \emph{chordal} graph (also
called a \emph{triangulated} graph or \emph{rigid circuit} graph)
\cite[Cor.~4.10]{rs:hyp}. It is natural to ask for an extension of
this result to $\psi$-graphical arrangements. The proof of the
following result is straightforward and will be omitted.

\begin{thm} \label{thm:psigss}
Let $(G,\psi)$ be as above. Suppose that we can order the vertices
of $G$ as $v_1,\dots,v_p$ such that:
 \begin{itemize}\item $v_{i+1}$ connects to previous vertices
     along a  clique (so $G$ is chordal).
  \item If $i<j$ and $v_i$ is adjacent to $v_j$, then
    $\psi(v_j)\subseteq \psi(v_i)$.
 \end{itemize}
Then $\ca_{G,\psi}$ is supersolvable.
\end{thm}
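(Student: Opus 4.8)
The plan is to show that the hypothesized vertex ordering $v_1,\dots,v_p$ witnesses supersolvability of $\ca_{G,\psi}$ by exhibiting an explicit maximal chain of modular flats in $L_{c(\ca_{G,\psi})}$. The natural candidate is the chain whose $i$-th flat is the subspace in which the coordinates $x_1,\dots,x_i$ are ``frozen'' — that is, the flat $X_i$ obtained by intersecting $c(\ca_{G,\psi})$ down to the relations forced among the first $i$ coordinates. More precisely, $X_i$ should be the intersection of all hyperplanes of $c(\ca_{G,\psi})$ (in $\rr^{p+1}$, with the extra coordinate $y$) that involve only $x_1,\dots,x_i$ and $y$: the hyperplanes $x_j = x_k$ with $j < k \le i$ and $v_j \sim v_k$, and the hyperplanes $x_k = \alpha y$ with $\alpha \in \psi(v_k)$, $k \le i$. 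One checks $\mathrm{rank}(X_i) = i$ (each new index $v_{i+1}$ either gets connected to a previously-seen clique, contributing exactly one new independent relation tying $x_{i+1}$ to that block, or contributes $x_{i+1} = \alpha y$ for some disallowed color, or remains free — in each case the rank goes up by one relative to what was already there; the chordality/clique condition guarantees no overcounting), so $\hat 0 = X_0 \lessdot X_1 \lessdot \cdots \lessdot X_p \lessdot \hat 1 = X_{p+1}$ is a maximal chain.

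Next I would verify that each $X_i$ is a modular flat of $L_{c(\ca_{G,\psi})}$. The cleanest route is the standard criterion that a flat $X$ is modular if and only if for every flat $Y$ one has $\mathrm{rank}(X) + \mathrm{rank}(Y) = \mathrm{rank}(X \vee Y) + \mathrm{rank}(X \wedge Y)$; for the cone of an arrangement it suffices to check this against the atoms, or equivalently to check that $X$ meets every line of the arrangement either in a point or along the whole line. Here the modularity of $X_i$ comes from the two hypotheses working together: any hyperplane $H$ of $\ca_{G,\psi}$ not among those defining $X_i$ either involves some coordinate $x_\ell$ with $\ell > i$, and then the key is that the ``projection'' $X_i \vee H$ is still spanned by relations that either lie in the first $i$ coordinates or peel off a single new block — the nesting condition $\psi(v_j) \subseteq \psi(v_i)$ for $i < j$, $v_i \sim v_j$ is exactly what ensures that when a later edge connects into the block of $x_i$, no new color-hyperplane outside $X_i$ is needed to express the meet. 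I would make this precise by writing $X_i \wedge H$ and $X_i \vee H$ explicitly as coordinate-subspaces and counting.

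The main obstacle I expect is the modularity check, and within it the bookkeeping around the color-hyperplanes $x_k = \alpha y$: unlike the graphical case (where all relations are of the form $x_j = x_k$ and everything is governed purely by the chordal vertex-elimination order), here a flat can force a coordinate to equal a specific multiple of $y$, and one must confirm that joins and meets with such flats still behave modularly. This is precisely where the condition $\psi(v_j) \subseteq \psi(v_i)$ is indispensable — without it, two adjacent vertices could carry incomparable disallowed-color sets, and the join of $X_i$ with a color-hyperplane at a later vertex could introduce a new color-relation on $x_i$ that drops the meet's rank, breaking modularity. Once the rank identity is confirmed against atoms, the Diamond/covering structure of $L_{c(\ca_{G,\psi})}$ gives modularity of each $X_i$, the chain $X_0 \lessdot \cdots \lessdot X_{p+1}$ is the required modular maximal chain, and supersolvability follows; as the paper notes, the remaining details are routine, so I would not belabor them.
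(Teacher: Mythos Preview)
The paper omits the proof, calling it straightforward, so there is nothing to compare against; your approach---exhibiting a modular maximal chain in $L_{c(\ca_{G,\psi})}$ indexed by the given vertex ordering---is the standard one and is correct in outline.

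One point of care: the claim $\mathrm{rank}(X_i)=i$ is not literally true as you have defined $X_i$. For instance, if $|\psi(v_1)|\ge 2$ then two distinct color-hyperplanes $x_1=\alpha y$ and $x_1=\beta y$ already force $y=0$, so your $X_1$ has rank $2$; conversely, if $\psi(v_1)=\emptyset$ then $X_1=X_0$. Thus the chain $X_0<X_1<\cdots$ need not be saturated. The tidy fix is to phrase the argument via the equivalent supersolvable-partition criterion: set $\pi_0=\{y=0\}$ and, for $1\le i\le p$, let
\[
\pi_i=\{x_j=x_i : j<i,\ v_j\sim v_i\}\cup\{x_i=\alpha y : \alpha\in\psi(v_i)\}.
\]
For distinct $H,H'\in\pi_i$ one checks that $H\cap H'$ lies in some hyperplane from an earlier block: the clique hypothesis handles two edge-hyperplanes (giving $x_j=x_k$ with $\max(j,k)<i$), the nesting $\psi(v_i)\subseteq\psi(v_j)$ handles an edge-hyperplane $x_j=x_i$ against a color-hyperplane $x_i=\alpha y$ (giving $x_j=\alpha y\in\pi_j$), and $y=0\in\pi_0$ handles two color-hyperplanes. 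This is exactly the modularity verification you sketch, just organized so that the rank bookkeeping is automatic.
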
 

\begin{conj} \label{conj:ss}
The converse to Theorem~\ref{thm:psigss} holds, that is, if
$\ca_{G,\psi}$ is supersolvable then $(G,\psi)$ satisfies the
conditions of Theorem~\ref{thm:psigss}.
\end{conj}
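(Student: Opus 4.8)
We outline a plan for proving Conjecture~\ref{conj:ss}. Write $\ca = \ca_{G,\psi}$. The idea is to prove that both conditions are necessary by induction on $\#V$, using the description of supersolvability in terms of modular coatoms of the cone $c(\ca)$. We may first reduce to the case in which $\ca$ is essential: an isolated vertex $v$ of $G$ with $\psi(v)=\emptyset$ contributes a free coordinate and may be inserted anywhere in an admissible ordering, and since $L_{c(\ca)}$ is the product of the lattices of the cones of the connected components of $(G,\psi)$, and a product of geometric lattices is supersolvable iff each factor is, we may also assume $G$ is connected.

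Next I would show that $G$ is chordal. Assuming $G$ connected with at least one edge, the localization of $c(\ca)$ at the flat $\{x_1=\cdots=x_p,\ y=0\}$ (a line in $\rr^{p+1}$) is exactly $c(\ca_G)$: the only hyperplanes of $c(\ca)$ containing that line are the homogenized edge hyperplanes and $y=0$, whereas a homogenized $x_i=\alpha y$ does not contain it. A localization of a supersolvable arrangement is again supersolvable — given a modular chain $\hat0 = m_0 \lessdot\cdots\lessdot m_r = \hat1$ of $L_{c(\ca)}$ and a flat $X$, the meets $m_i\wedge X$ form, after deleting repetitions, a modular chain of $[\hat0,X]=L_{c(\ca)_X}$ — so $\ca_G$ is supersolvable and hence, by Stanley \cite[Cor.~4.10]{rs:hyp}, $G$ is chordal.

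The heart of the argument is to locate the correct last vertex. For a vertex $v$ with $\ca_{G-v,\psi}$ essential, put $D_v = \bigcap c(\ca_{G-v,\psi})$; this is a corank-one flat of $c(\ca)$, a hyperplane of $c(\ca)$ contains $D_v$ precisely when it does not involve the coordinate $x_v$, and consequently $c(\ca)_{D_v} = c(\ca_{G-v,\psi})$. I would then prove the \emph{key lemma}: $D_v$ is a modular coatom of $L_{c(\ca)}$ if and only if $v$ is simplicial in $G$ and $\psi(v)\subseteq\psi(u)$ for every neighbor $u$ of $v$. When $\psi\equiv\emptyset$ this specializes to the classical fact that the modular coatoms of a graphical arrangement correspond to its simplicial vertices; the hyperplanes $x_i=\alpha$ are exactly what force the extra nesting condition, and the ``if'' half should mirror the (omitted) proof of Theorem~\ref{thm:psigss}. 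Granting the lemma, if $c(\ca)$ has a modular coatom of the form $D_v$, then $c(\ca_{G-v,\psi}) = c(\ca)_{D_v}$ is supersolvable, so by the inductive hypothesis $(G-v,\psi|_{G-v})$ has an ordering satisfying the two conditions; appending $v$ at the end gives such an ordering for $(G,\psi)$, since $v$ is simplicial in $G$ and $\psi$-nested.

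The main obstacle is to show that a supersolvable $c(\ca)$ always has a modular coatom \emph{of vertex type} $D_v$, and not only one of ``color type'': localizing at a suitable flat that records a single disallowed color $\alpha$ produces the semicone $\sco(\ca_{G'})$ of the graphical arrangement of the graph $G'$ obtained from $G$ by adjoining one new vertex joined to $\{\,v_i : \alpha\in\psi(v_i)\,\}$. Color-type modular coatoms genuinely occur (already for $G=K_2$), and localizing at one does not decrease $\#V$, so an induction on $\#V$ does not immediately close, and the inclusions among the sets $\psi(v_i)$ are not obviously recoverable from the pruned data. I expect this to be overcome by working with the \emph{entire} modular flag $\hat0\lessdot X_1\lessdot\cdots\lessdot X_p\lessdot\hat1$ of $c(\ca)$ rather than with its top coatom alone: somewhere along the flag a coordinate $x_v$ must first become ``resolved,'' and an exchange argument (of the kind used to connect the maximal chains of a supersolvable lattice) should allow one to slide a vertex-type step to the top. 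An alternative is to classify \emph{all} modular coatoms of $\ca_{G,\psi}$ — I expect exactly the $D_v$ for simplicial $\psi$-nested $v$ together with the color-type ones above — and then show that the existence of a color-type modular coatom forces the pruned colors to have been redundant, contradicting the minimality of a counterexample. Establishing the key lemma and ruling out exotic modular coatoms is where the real work lies.
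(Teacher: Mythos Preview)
The paper gives no proof of this statement: it is stated as Conjecture~\ref{conj:ss}, accompanied only by the remark that the authors ``suspect that Conjecture~\ref{conj:ss} will not be so difficult to prove.'' There is therefore nothing in the paper to compare your proposal against.

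Assessing the proposal on its own terms: what you have is a sensible strategy, not a proof, and you identify the gap yourself. The localization argument at the flat $\{x_1=\cdots=x_p,\ y=0\}$ correctly recovers $c(\ca_G)$ and hence forces $G$ chordal via \cite[Cor.~4.10]{rs:hyp}. The inductive scheme built on the key lemma is the natural one, and the ``if'' direction of that lemma is indeed the content of Theorem~\ref{thm:psigss}. The real obstruction is precisely what you flag: you have no argument that a supersolvable $c(\ca_{G,\psi})$ must possess a modular coatom of the vertex form $D_v$. Your own example ($G=K_2$ with unequal $\psi$-values) shows that color-type modular coatoms genuinely occur, so this step cannot be skipped. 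Neither of the two escape routes you propose---an exchange argument along the full modular flag, or a complete classification of modular coatoms followed by a redundancy argument in a minimal counterexample---is carried out, and both hide nontrivial work (in particular, your expected classification of modular coatoms is itself an unproved claim). Until one of these is executed, the argument does not close.
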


We suspect that Conjecture~\ref{conj:ss} will not be so difficult to
prove. There are numerous characterizations of chordal graphs
\cite{chordal}. If Conjecture~\ref{conj:ss} is 
  true, then it would be interesting to investigate which of these
  characterizations have analogues for the pairs $(G,\psi)$ satisfying
  the conditions of Theorem~\ref{thm:psigss}.

A profound generalization of supersolvable arrangements is due to
H. Terao (e.g.\ \cite[Ch.~4]{or-te}\cite[Thm.~4.14]{rs:hyp}), called
\emph{free} arrangements. Freeness was defined originally for central
arrangements, but we can define a noncentral arrangement $\ca$ to be
free if the cone $c(\ca)$ is free. The ``factorization theorem'' of
Terao asserts that if $\ca$ is free then the zeros of $\chi_\ca(q)$
are nonnegative integers (with an algebraic interpretation). Every
supersolvable arrangement is free, and every free graphical
arrangement is supersolvable. This leads to a second conjecture, which
again may not be difficult to prove.

\begin{conj}
If $\ca_{G,\psi}$ is a free $\psi$-graphical arrangement, then
$\ca_{G,\psi}$ is supersolvable.
\end{conj}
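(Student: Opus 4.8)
The plan is to prove the (formally stronger) assertion that a free $\psi$-graphical arrangement $\ca_{G,\psi}$ satisfies both conditions displayed in Theorem~\ref{thm:psigss}; supersolvability then follows from that theorem, and as a by-product Conjecture~\ref{conj:ss} is also settled, since every supersolvable arrangement is free. The argument divides into a ``chordality'' half and a ``nesting'' half, the latter being the hard one.

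\emph{Chordality.} This is a direct transcription of the known proof that a free graphical arrangement has chordal underlying graph. I would use two theorems of Terao (see \cite[Ch.~4]{or-te}): freeness passes to every localization $\ca^X$, $X$ a flat; and the characteristic polynomial of a free arrangement factors as a product of linear factors $q-e_i$ with $e_i$ nonnegative integers, so in particular it has only real roots. Assume $G$ is not chordal and choose $u_1,\dots,u_k$ ($k\ge 4$) inducing a cycle. Take $X$ to be the flat $\{x_{u_1}=x_{u_2}=\cdots=x_{u_k}\}$, the intersection of the $k-1$ edge hyperplanes of the subpath $u_1-u_2-\cdots-u_k$. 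An edge hyperplane $x_i=x_j$ of $\ca_{G,\psi}$ contains $X$ exactly when $i,j\in\{u_1,\dots,u_k\}$, and since the cycle is induced these are precisely its $k$ edges; a hyperplane $x_i=\alpha$ never contains $X$, because $X$ pins no coordinate to a constant. Hence $\ca_{G,\psi}^X$ is the graphical arrangement of the induced cycle $C_k$ (with harmless extra free coordinates), whose characteristic polynomial is $(q-1)\bigl((q-1)^{k-1}+(-1)^k\bigr)$; for $k\ge 4$ the second factor has a non-real root, contradicting freeness of $\ca_{G,\psi}^X$. Therefore $G$ is chordal.

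\emph{Nesting.} Here localization is useless: for the path $P$ on vertices $a,b,c$ with $\psi(a)=\{1\}$, $\psi(b)=\emptyset$, $\psi(c)=\{2\}$, the pair $(P,\psi)$ admits no ordering as in Theorem~\ref{thm:psigss}, and indeed $\chi_{\ca_{P,\psi}}(q)=(q-2)(q^2-2q+2)$ so $\ca_{P,\psi}$ is not free, yet every \emph{proper} localization of $\ca_{P,\psi}$ is free. So the obstruction to freeness is genuinely global, and I would attack it with Terao's Addition--Deletion Theorem, using the computation that restricting $\ca_{G,\psi}$ to a $\psi$-hyperplane $H=\{x_i=\alpha\}$ produces the $\psi$-graphical arrangement of $G\setminus v_i$ in which $\alpha$ is adjoined to $\psi(v_j)$ for every neighbour $v_j$ of $v_i$, whereas deleting $H$ produces $\ca_{G,\psi'}$ with $\alpha$ removed from $\psi(v_i)$. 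Starting from the base case $\psi\equiv\emptyset$ (graphical arrangement on a chordal graph, hence supersolvable, so a perfect elimination ordering exists and the nesting condition is vacuous), one would induct on the number of $\psi$-hyperplanes: given $\ca_{G,\psi}$ free, delete a suitably chosen $\psi$-hyperplane $\{x_i=\alpha\}$, argue the result is still free, apply the inductive hypothesis to get a valid ordering for the smaller pair, and check that reinserting $\{x_i=\alpha\}$ (that is, returning $\alpha$ to $\psi(v_i)$) can be absorbed into a valid ordering for $(G,\psi)$ --- which will likely require a strengthened inductive statement keeping track of how $v_i$ sits relative to its neighbours in the ordering.

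I expect the main obstacle to be exactly this deletion step of the nesting half. Deletions, unlike localizations, need not preserve freeness, so one must pick the $\psi$-hyperplane to remove with care; and reinserting it threatens the inclusions $\psi(v_j)\subseteq\psi(v_i)$ at precisely the edges where $v_i$ is the later vertex, so the bookkeeping must be arranged so that enlarging $\psi(v_i)$ is harmless. Equivalently, the whole nesting half amounts to showing that whenever $G$ is chordal but $(G,\psi)$ admits no valid ordering, $\chi_{\ca_{G,\psi}}(q)$ has a non-integer --- conjecturally a non-real --- root; the small examples above suggest that a minimal such $(G,\psi)$ always contributes an irreducible quadratic factor of negative discriminant, and isolating and tracking that factor through repeated deletion--contraction on the $\psi$-hyperplanes should be the technical core of the proof.
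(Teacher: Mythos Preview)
The paper does not prove this statement: it is stated as a conjecture, with the remark that it ``may not be difficult to prove,'' and no argument is given. So there is no proof in the paper to compare your attempt against.

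As for your proposal itself, it is a plan rather than a proof, and you say so. The chordality half is essentially complete and correct: localization of a free arrangement at a flat is free, the localization at the common-value flat of an induced cycle $C_k$ picks out exactly the cycle's edge hyperplanes and none of the $\psi$-hyperplanes, and for $k\ge 4$ the chromatic polynomial $(q-1)\bigl((q-1)^{k-1}+(-1)^k\bigr)$ has non-real roots, contradicting Terao's factorization. That argument goes through unchanged from the purely graphical case.

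The nesting half, however, is not a proof but an outline with an explicitly flagged gap. You correctly observe that localization cannot detect the failure (your path example shows this), so you fall back on Addition--Deletion. But the crucial step --- that deleting \emph{some} $\psi$-hyperplane from a free $\ca_{G,\psi}$ again yields a free arrangement --- is asserted, not argued, and you yourself identify it as ``the main obstacle.'' Without that step the induction does not start, and the final paragraph (tracking an irreducible quadratic factor through deletion--contraction) is a heuristic drawn from small examples, not a mechanism. So as it stands the proposal does not settle the conjecture; it reduces it to a believable but unproved statement about which $\psi$-hyperplane can be safely deleted from a free $\ca_{G,\psi}$.
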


\section{Applications}
One immediate application of Theorem~\ref{thm:dt} follows from the
matroidal definition of Dilworth truncation.

\begin{coro}
The characteristic polynomial $\chi_{\vo(\ca,p)}(q)$, where $p$ is
generic, is a matroidal invariant, that is, it depends only
on $L_\ca$. In particular, the number $v(\ca,p)$ of valid orderings
with respect to a generic point $p$ is a matroidal invariant and hence
is independent of the region in which $p$ lies.
\end{coro}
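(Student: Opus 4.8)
The plan is to deduce this corollary directly from Theorem~\ref{thm:dt}. That theorem tells us that for a generic point $p$, the intersection lattice $L_{\vo(\ca,p)}$ is isomorphic to $L_{D_1(\ca)}$, where $D_1(\ca)$ means the Dilworth truncation $D_1 M_\ca$ of the matroid $M_\ca$ (equivalently, of the geometric lattice $L_\ca$, since $D_1$ was defined to carry over to geometric lattices). The construction of the Dilworth truncation $D_1 M$ from $M$ depends only on the matroid $M$ — indeed the definition given in the excerpt builds $D_k M$ purely from the rank function of $M$ on unions of flats — so $L_{D_1(\ca)}$ depends only on $L_\ca$. Hence $L_{\vo(\ca,p)}$, for generic $p$, depends only on $L_\ca$.

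Next I would invoke the standard fact that the characteristic polynomial $\chi_{\vo(\ca,p)}(q)$ is determined by the intersection lattice $L_{\vo(\ca,p)}$ — this is its definition via the M\"obius function of that lattice (e.g.\ \cite[{\S}1.3]{rs:hyp}). Combining these two observations gives $\chi_{\vo(\ca,p)}(q) = \chi_{D_1 L_\ca}(q)$, which manifestly depends only on $L_\ca$, proving the first assertion. For the second assertion, Zaslavsky's theorem \eqref{eq:zas} expresses the number of regions as $r(\vo(\ca,p)) = (-1)^d \chi_{\vo(\ca,p)}(-1)$, where $d = \dim V$; since $\vo(\ca,p)$ is essentially a central arrangement in $V$, its rank is also a matroidal invariant of $L_\ca$, so $r(\vo(\ca,p))$ is a matroidal invariant as well. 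By the Proposition, $r(\vo(\ca,p)) = v(\ca,p)$ is the number of valid orderings, so this too depends only on $L_\ca$.

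Finally, for the ``independent of the region'' clause: the set of generic points $p$ (those not on any hyperplane of $\ca$ and in sufficiently general position for Theorem~\ref{thm:dt} to apply) is open and dense, and moving $p$ does not change $\ca$ or $L_\ca$. Since $v(\ca,p) = \chi_{D_1 L_\ca}(-1) \cdot (-1)^d$ depends only on $L_\ca$, it takes the same value for every generic $p$, regardless of which region of $\ca$ contains $p$. I do not anticipate a serious obstacle here; the only point requiring care is to state precisely that ``generic'' in Theorem~\ref{thm:dt} means ``outside a certain measure-zero algebraic set,'' so that the conclusion holds uniformly on an open dense set and in particular meets every region of $\ca$.
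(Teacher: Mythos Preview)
Your proof is correct and follows essentially the same route as the paper: invoke Theorem~\ref{thm:dt} to identify $L_{\vo(\ca,p)}$ with $L_{D_1(\ca)}$, note that the Dilworth truncation is a purely matroidal construction, and then apply Zaslavsky's theorem for the region count. You supply a bit more detail than the paper (e.g., explicitly citing the Proposition and discussing genericity meeting every region), but the argument is the same.
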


\begin{proof}
The Dilworth truncation $D_kL$ of a geometric lattice $L$ is defined
as $L_{D_kM}$, where $M$ is the matroid assocated to $L$. The proof
that $L_{\vo(\ca,p)}(q)$ is a matroidal invariant
follows from Theorem~\ref{thm:dt}. The statement for $v(\ca,p)$ then
follows from Zaslavsky's theorem~\eqref{eq:zas}. 
\end{proof}

For our second application, let $c(n,k)$ denote the signless Stirling
number of the first kind, i.e., the number of permutations $w\in\sn$
with $k$ cycles. 

\begin{thm} \label{thm:max}
Let $\ca$ be an arrangement in $\rr^d$ with $m$ hyperplanes, and let
$p$ be a point in $\rr^d$ not lying on any $H\in\ca$. Then
  $$ v(\ca,p)\leq 2(c(m,m-d+1)+c(m,m-d+3)+c(m,m-d+5)+\cdots), $$
and this inequality is best possible. (The sum on the right is finite
since $c(m,k)=0$ for $k>m$.)
\end{thm}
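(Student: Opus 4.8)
The plan is to recast the count via Section~2 and then evaluate a characteristic polynomial. By Proposition~2.2, $v(\ca,p)=r(\vo(\ca,p))$, so it is enough to bound the number of regions of the central arrangement $\vo(\ca,p)$ and to exhibit an $(\ca,p)$ meeting the bound. First I would reduce to the generic situation of Theorem~\ref{thm:dt}. As $p'\to p$, each hyperplane of $\vo(\ca,p')$ tends to the corresponding hyperplane of $\vo(\ca,p)$ (some may merge in the limit), and the number of regions of an arrangement cannot increase under such a specialization; hence $r(\vo(\ca,p))\le r(\vo(\ca,p'))$ for $p'$ generic. For generic $p'$, Theorem~\ref{thm:dt} and Zaslavsky's formula~\eqref{eq:zas} identify $r(\vo(\ca,p'))$ with the number of regions of an arrangement realizing $D_1 M_\ca$. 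Finally, $M_\ca$ is a \emph{simple} matroid of rank $\le d+1$ on $m$ points, hence a specialization of the uniform matroid $U_{d+1,m}$: realize $U_{d+1,m}$ generically in $\rr^{d+1}$ and deform its vectors onto a realization of $M_\ca$; simplicity of $M_\ca$ keeps the vectors distinct, so the lines (rank-two flats) vary continuously, and with them their intersections with a fixed generic hyperplane --- which, as in the proof of Theorem~\ref{thm:dt}, realize the Dilworth truncation. Invoking the region-count monotonicity a second time gives $r(\vo(\ca,p))\le (-1)^d\chi_{D_1 U_{d+1,m}}(-1)$, with equality when $\ca$ is a generic arrangement of $m$ hyperplanes in $\rr^d$ and $p$ is generic (so that $M_\ca\cong U_{d+1,m}$ and Theorem~\ref{thm:dt} applies). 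This also settles the claim that the bound is best possible.

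The structural heart is the identity $D_1 U_{d+1,m}=T^{m-d-1}\Pi_m$: the Dilworth truncation of a uniform matroid is an ordinary truncation of a partition lattice. To prove it, regard $I\subseteq\binom{[m]}{2}$ as a graph on $[m]$. The condition defining $D_1 B_m$ says precisely that every subcollection $I'\subseteq I$ spans at least $|I'|+1$ vertices, i.e., that $I$ is a forest --- this is the matroid form of~\eqref{eq:d1bm}. Replacing $\mathrm{rank}_{B_m}$ by $\mathrm{rank}_{U_{d+1,m}}(S)=\min(|S|,d+1)$ in the definition of the Dilworth truncation appends exactly the condition $|I|\le d$. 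Since $T^{m-d-1}\Pi_m$ has rank $d$, its independent sets are those of $\Pi_m$ of size $\le d$, namely the forests on $[m]$ with at most $d$ edges --- the same family. Hence $D_1 U_{d+1,m}=T^{m-d-1}\Pi_m$.

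It remains to compute $(-1)^d\chi_{T^{m-d-1}\Pi_m}(-1)$. Here $\chi_{\Pi_m}(q)=(q-1)(q-2)\cdots(q-m+1)$ has coefficient $(-1)^{m-1-j}c(m,j+1)$ on $q^{j}$. Using the standard description of the characteristic polynomial of a truncation --- each truncation drops the lowest-degree terms, divides by the matching power of $q$, and adjusts the constant term by $-$(the value at $q=1$ of the surviving part) --- and truncating $m-d-1$ times, the value at $q=-1$ collapses (because of the alternating signs of $\chi_{\Pi_m}$) to $(-1)^d\sum_{\ell=m-d+1}^{m}c(m,\ell)-\sum_{\ell=m-d+1}^{m}(-1)^{m-\ell}c(m,\ell)$. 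Multiplying by $(-1)^d$ yields $\sum_{\ell=m-d+1}^{m}(1-(-1)^{d+m-\ell})c(m,\ell)$; the coefficient $1-(-1)^{d+m-\ell}$ is $2$ when $\ell\equiv m-d+1\pmod 2$ and $0$ otherwise, so this equals $2(c(m,m-d+1)+c(m,m-d+3)+c(m,m-d+5)+\cdots)$, as required. (For $m\le d$ one runs the argument with $B_m$ in place of $U_{d+1,m}$ and recovers the value $m!$.)

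The matroid identification and the polynomial manipulation are routine; it is nonetheless in this last step that the signless Stirling numbers genuinely appear, via the alternating signs in $\chi_{\Pi_m}$. The step I expect to require the most care is the monotonicity argument --- in particular, making precise that the Dilworth truncation of a degenerating family of point configurations again degenerates, so that $r(\vo(\ca,p))$ is bounded by its generic value. This relies on the generic-hyperplane-section realization of $D_1$ from the proof of Theorem~\ref{thm:dt} and on simplicity of $M_\ca$, which prevents the relevant lines from collapsing to points in the limit.
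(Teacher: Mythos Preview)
Your proof is correct and follows the same overall route as the paper: reduce to the generic case by perturbation, identify the first Dilworth truncation of the resulting uniform matroid with $T^{m-d-1}\Pi_m$, and evaluate the truncated characteristic polynomial of $\Pi_m$ at $q=-1$ using $\chi(1)=0$ to pin down the constant term. The one substantive difference is in how the key identity $D_1 U_{d+1,m}=T^{m-d-1}\Pi_m$ is obtained: you verify it directly by showing that the independent sets on both sides are exactly the forests on $[m]$ with at most $d$ edges, while the paper derives it in one line from the general fact that ordinary truncation $T^i$ and Dilworth truncation $D_j$ commute, combined with $D_1 B_m\cong\Pi_m$. Your monotonicity step is also a bit more circuitous than necessary---the paper simply perturbs the hyperplanes of $\ca$ (and $p$) in $\rr^d$ directly, so that $\vo(\ca,p)$ itself degenerates from the generic $\vo(\ca',p')$, rather than passing through the semicone realization of $D_1$---but your version is valid once one checks (as you note) that the fixed generic hyperplane stays transverse to the moving lines throughout the deformation.
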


\begin{proof}
It is not hard to see that $v(\ca,p)$ will be maximized when the
hyperplanes $H\in\ca$ are as ``generic as possible,'' i.e., the
intersection poset $L_\ca$ is a boolean algebra $B_m$ with all
elements of rank greater than $d$ (including the top element) removed,
and when $p$ is also generic. (Consider the effect of small
perturbations of hyperplanes not in general position.) Assume then
that $L_\ca$ is such a truncated boolean algebra.  Since $L_\ca$
becomes a geometric lattice $\hat{L}_\ca$ when we add a top element,
it follows that the semicone $\sco(\ca)$ satisfies
$L_{\sco(\ca)}\cong \hat{L}_\ca$. Now ordinary truncation $T^i$ and
Dilworth truncation $D_j$ commute (for $i+j<d$, the ambient
dimension). By equation~\eqref{eq:d1bm} we have
$D_1\hat{L}_\ca\cong T^{m-d-1}\Pi_m$. Now \cite[Exam.~3.11.11]{ec1}  
  $$ \chi_{\Pi_m}(q) = (q-1)\cdots (q-m+1) = \sum_{j=0}^{m-1}
    (-1)^j c(m,m-j)q^{m-j-1}. $$
Thus
  $$ \chi_{T^{m-d-1}\Pi_m}(q) = \sum_{j=0}^{d-1} (-1)^j c(m,m-j)
      q^{d-j} + C, $$
for some $C\in\zz$. Since $\chi_{\mathcal{B}}(1)=0$ for any central
arrangement $\mathcal{B}$, we get
  $$ C= -\sum_{j=0}^{d-1} (-1)^j c(m,m-j). $$
Therefore
  \beas v(\ca) & = & (-1)^d\chi_{T^{m-d-1}\Pi_m}(-1)\\ & = &
    \sum_{j=0}^{d-1} c(m,m-j) -(-1)^d\sum_{j=0}^{d-1} (-1)^j
  c(m,m-j)\\ & = & 2(c(m,m-d+1)+c(m,m-d+3)+c(m,m-d+5)+\cdots), 
  \eeas
and the proof follows.
\end{proof}

For fixed $k$, we have that $c(m,m-k)$ is a polynomial in $m$. Hence
for fixed $d$, the bound in Theorem~\ref{thm:max} is a polynomial
$P_d(m)$ in $m$. For instance, 
 \beas P_1(m) & = & 2\\
       P_2(m) & = & m(m-1)\\
       P_3(m) & = & \frac{1}{12}(2m^4-10m^3+9m^2-2m+4)\\
       P_4(m) & = & \frac{1}{24}m(m-1)(m^4-6m^3+11m^2-6m+24)\\
       P_5(m) & = &
       \frac{1}{2880}
   (15m^8-180m^7+830m^6-1848m^5+2735m^4-3300m^3\\
     & & \quad +2180m^2-432m+5760). 
     \eeas

Clearly given $m>d$ we can find a convex $d$-polytopes with $m$
facets, where the affine spans of the facets are as ``generic as
possible,'' as defined at the beginning of the proof of
Theorem~\ref{thm:max}. Thus we obtain the following corollary to
Theorem~\ref{thm:max}.

\begin{coro}
Let $\cp$ be a convex polytope in $\rr^d$ with $m$ facets, and let
$p$ be a point in the interior of $\cp$. Then the number
$\mathrm{ls}(\cp,p)$ of line shellings of $\cp$ whose shelling line
passes through $p$ satisfies 
  $$ \mathrm{ls}(\cp,p)\leq 2(c(m,m-d+1)+c(m,m-d+3)
    +c(m,m-d+5)+\cdots), $$ 
and this inequality is best possible. 
\end{coro}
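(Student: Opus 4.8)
The plan is to deduce the inequality directly from Theorem~\ref{thm:max} and then verify sharpness by an explicit construction. For the inequality, set $\ca=\vis(\cp)$, the visibility arrangement of $\cp$. Since $\cp$ has $m$ facets, $\ca$ consists of exactly $m$ hyperplanes (the affine spans of the facets), and since $p$ lies in the interior of $\cp$, the discussion in Section~3 identifies the line shellings of $\cp$ whose shelling line passes through $p$ with the valid orderings of $\ca$ with respect to $p$; that is, $\mathrm{ls}(\cp,p)=v(\ca,p)$. Theorem~\ref{thm:max} then yields $\mathrm{ls}(\cp,p)\le 2(c(m,m-d+1)+c(m,m-d+3)+c(m,m-d+5)+\cdots)$, which is the asserted bound.

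For sharpness I would exhibit, for every $m>d$, a convex $d$-polytope $\cp$ with $m$ facets whose $m$ facet hyperplanes are in general position, i.e., every $d$ of them meet in a single point and no $d+1$ of them have a common point. For such a $\cp$ the intersection lattice $L_{\vis(\cp)}$ is the boolean algebra $B_m$ with all elements of rank exceeding $d$ deleted, which is exactly the ``as generic as possible'' situation analyzed in the proof of Theorem~\ref{thm:max}. Choosing $p$ to be a generic point of the (nonempty, open) interior of $\cp$, the computation carried out in that proof gives $v(\ca,p)=2(c(m,m-d+1)+c(m,m-d+3)+c(m,m-d+5)+\cdots)$, and hence $\mathrm{ls}(\cp,p)$ attains the bound.

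To build such a polytope I would start from a $d$-simplex, whose $d+1$ facet hyperplanes are automatically in general position, and repeatedly ``cut off a vertex'' with a new, generically chosen hyperplane: each shallow generic cut near a vertex $v$ meets only the $d$ facets through $v$, creates exactly one new facet, keeps the polytope bounded, and preserves general position of the full collection of facet hyperplanes. After $m-d-1$ such cuts one obtains the desired $\cp$. The only step I would treat with care is checking that no facet hyperplane becomes redundant during this process and that each new hyperplane genuinely supports a facet; this is where the ``main obstacle'' lies, but it is routine once one notes that a sufficiently shallow cut interacts only with the local vertex figure. Finally, since genericity of $p$ is an open dense condition, it can be imposed simultaneously with membership in the interior of $\cp$, completing the proof.
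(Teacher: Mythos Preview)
Your argument is correct and follows the same route as the paper: the inequality is immediate from Theorem~\ref{thm:max} applied to $\ca=\vis(\cp)$, and sharpness comes from the existence, for each $m>d$, of a convex $d$-polytope whose $m$ facet hyperplanes are in general position. The paper simply asserts that such a polytope ``clearly'' exists, whereas you supply the standard explicit construction via iterated generic vertex truncations of a simplex; your additional remarks about preserving general position and choosing $p$ generically inside the interior are appropriate refinements of what the paper leaves implicit.
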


\section{Further vistas}
We have considered the intersection of a line $L$ through a point $p$
with the hyperplanes of an arrangement $\ca$. We will sketchily
describe an extension. Namely, what if we replace $L$
with an $m$-dimensional plane (or $m$-plane for short) $P$ through $m$ 
points $p_1,\dots,p_m$ not lying on any $H\in\ca$? We will obtain an
induced arrangment
  $$ \ca_P=\{ H\cap P\st H\in\ca\} $$
in the ambient space $P$. Define the generalized valid order
arrangment vo$(\ca;p_1,\dots,p_m)$ to consist of all hyperplanes
passing through $p_1,\dots,p_m$ and every intersection of $m~+~1$
hyperplanes of $\ca$, including ``intersections at $\infty$. The
regions of vo$(\ca;p_1,\dots,p_m)$ correspond to the different
equivalence classes of arrangements $\ca_P$, where $\ca_P$ and $\ca_Q$
are considered equivalent if they correspond to the same
\emph{oriented} matroid. We then have the following analogue of
Theorem~\ref{thm:dt}.

\begin{thm} \label{thm:dt2}
Let $\ca$ be an arrangement in the real vector space $V$, and let
$p_1,\dots,p_m$ be ``sufficiently generic'' points of $V$. Then
$L_{\vo(\ca;p_1,\dots,p_m)}\cong L_{D_m(\ca)}$.
\end{thm}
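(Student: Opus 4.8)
The plan is to prove Theorem~\ref{thm:dt2} by the same route as Theorem~\ref{thm:dt}: realize $\vo(\ca;p_1,\dots,p_m)$ as the arrangement dual to the Brylawski--Mason geometric model of the $m$th Dilworth truncation. The first ingredient is the natural generalization of the fact quoted in the proof of Theorem~\ref{thm:dt}: if $M$ is a geometry realized by a vector configuration in a space of dimension $\mathrm{rank}(M)$, then $D_mM$ is realized by the configuration of intersections $F\cap W$, where $F$ runs over the flats of $M$ of rank $m+1$ (each viewed as a linear subspace) and $W$ is a sufficiently generic subspace of codimension $m$ --- for $m=1$ this is the generic hyperplane section of the lines of $M$. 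If this is not available in the literature in this generality, I would extract it from the matroidal definition of $D_mM$ given in Section~2: for flats $F_1,\dots,F_j$ of rank $m+1$ and any $\{i_1,\dots,i_t\}$, the vectors $F_{i_1}\cap W,\dots,F_{i_t}\cap W$ all lie in $\langle F_{i_1}\cup\cdots\cup F_{i_t}\rangle\cap W$, which for generic $W$ has dimension $\mathrm{rank}_M\big(F_{i_1}\cup\cdots\cup F_{i_t}\big)-m$; hence their linear independence forces $\mathrm{rank}_M(F_{i_1}\cup\cdots\cup F_{i_t})\ge t+m$, precisely the defining condition of $D_mM$. The reverse implication, that this condition in fact suffices for independence, is where genericity of $W$ is used (the bad $W$ forming a proper subvariety of the relevant Grassmannian); I expect this to be the main obstacle, and the cleanest option is simply to cite \cite{bry}\cite{bry2}\cite{mason} for general $m$ rather than reprove it.

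Granting this, I would set up the duality through the semicone, exactly as for $m=1$. Identify $V$ with $\rr^d$ and work in $\rr^{d+1}$ with coordinates $x_1,\dots,x_d,y$, so that $\ca$ is the slice $y=1$ of $\sco(\ca)$; a hyperplane $H\colon\alpha\cdot x=c$ of $\ca$ becomes $\tilde H=\{z\in\rr^{d+1}\st z\cdot v_H=0\}$ with $v_H=(\alpha,-c)$, and $M_\ca\cong M_{\sco(\ca)}$ is the matroid of $\{v_H\st H\in\ca\}$. Given points $p_1,\dots,p_m$ of $V$ on no hyperplane of $\ca$, put $\tilde p_i=(p_i,1)$ and $W=\mathrm{span}(\tilde p_1,\dots,\tilde p_m)^\perp$, a subspace of $\rr^{d+1}$ of codimension $m$. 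If $K$ is a hyperplane of $\rr^d$ through $p_1,\dots,p_m$ whose homogenization contains $\tilde H_1\cap\cdots\cap\tilde H_{m+1}$ for hyperplanes $H_1,\dots,H_{m+1}\in\ca$ --- this being the precise meaning of ``$K$ passes through the intersection of $m+1$ hyperplanes of $\ca$, including intersections at infinity'' --- then a dimension count identifies the homogenization of $K$ with $\mathrm{span}\big(\tilde p_1,\dots,\tilde p_m,\ \tilde H_1\cap\cdots\cap\tilde H_{m+1}\big)$, so its normal vector lies in $\mathrm{span}(v_{H_1},\dots,v_{H_{m+1}})\cap W$; conversely every such intersection is the normal of such a $K$. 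As $\{H_1,\dots,H_{m+1}\}$ ranges over the rank-$(m+1)$ subsets of $\ca$, the space $\mathrm{span}(v_{H_1},\dots,v_{H_{m+1}})$ ranges over the rank-$(m+1)$ flats of $M_\ca$, and intersections at infinity need no separate discussion: a parallel class, or any flat meeting $\{y=0\}$, is simply another flat of $M_\ca$. Thus the normals of the hyperplanes of $\vo(\ca;p_1,\dots,p_m)$ are exactly the vectors $F\cap W$ with $F$ a rank-$(m+1)$ flat of $M_\ca$; for $m=1$ one checks that this reproduces both the hyperplanes $\aff(p,H\cap H')$ and the hyperplanes $\mathrm{par}(p,H)$ used in Theorem~\ref{thm:dt}.

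To finish, I would invoke that the intersection lattice of a central arrangement is the lattice of flats of the matroid of its normal vectors; hence $L_{\vo(\ca;p_1,\dots,p_m)}$ is the lattice of flats of the configuration $\{F\cap W\st F\ \text{a rank-}(m+1)\ \text{flat of}\ M_\ca\}$, which by the geometric model recalled above equals $L_{D_mM_\ca}=L_{D_m(\ca)}$. ``Sufficiently generic'' for $(p_1,\dots,p_m)$ then means precisely that $\tilde p_1,\dots,\tilde p_m$ are linearly independent (equivalently, $p_1,\dots,p_m$ are affinely independent in $V$) and that the resulting $W$ avoids the finitely many proper subvarieties entering the $D_m$ realization --- an open dense condition. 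The genuinely new point compared with Theorem~\ref{thm:dt} is confined to the assertion that the generic codimension-$m$ section of the rank-$(m+1)$ flats carries the Dilworth rank function rather than something smaller; the remainder is the same bookkeeping as in the $m=1$ case, and the identification of the regions of $\vo(\ca;p_1,\dots,p_m)$ with equivalence classes of the induced arrangements $\ca_P$ proceeds by the reasoning given in Section~2 with $m$-planes in place of lines.
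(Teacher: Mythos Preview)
The paper does not actually prove Theorem~\ref{thm:dt2}; it is stated without proof in the ``Further vistas'' section as the evident analogue of Theorem~\ref{thm:dt}. Your proposal is precisely the natural extension of the paper's (one-sentence) proof of Theorem~\ref{thm:dt} to general $m$: you identify the normals of the hyperplanes of $\vo(\ca;p_1,\dots,p_m)$, via the semicone, with the intersections of the rank-$(m{+}1)$ flats of $M_\ca$ with a generic codimension-$m$ subspace $W$, which is the Brylawski--Mason geometric realization of $D_m$. The duality bookkeeping you set up (lifting $p_i$ to $\tilde p_i=(p_i,1)$, taking $W=\mathrm{span}(\tilde p_1,\dots,\tilde p_m)^\perp$, and reading off that the normal of a hyperplane through the $p_i$ and $\tilde H_1\cap\cdots\cap\tilde H_{m+1}$ lies in $\mathrm{span}(v_{H_1},\dots,v_{H_{m+1}})\cap W$) is correct and handles the ``intersections at $\infty$'' uniformly, exactly as you say. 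In short, your approach matches what the paper implicitly intends and supplies more detail than the paper gives even for $m=1$; the only point to source carefully is the general-$m$ version of the Brylawski--Mason realization, which you have flagged.
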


\begin{figure}
\centering
\centerline{\includegraphics[width=6cm]{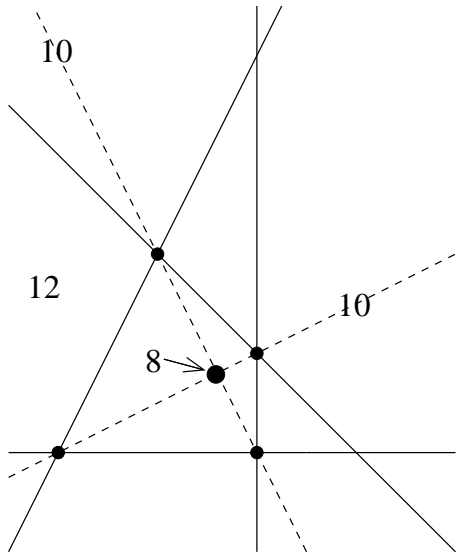}}
\caption{An example of a polyhedral decomposition $\Gamma$ associated
  to valid orderings} 
\label{fig:nong}
\end{figure}

Theorem~\ref{thm:dt} deals with $\vo(\ca,p)$ when $p$ is generic. What
about nongeneric $p$? Define two points $p,q$ not lying on any
hyperplane of $\ca$ to be \emph{equivalent} if there is a canonical
bijection $\varphi\colon\vo(\ca,p)\to\vo(\ca,q)$. By \emph{canonical},
we mean that if $H$ is a hyperplane of $\vo(\ca,p)$ which is the
affine span with $p$ and the intersection $H_1\cap H_2$ of two
hyperplanes in $\ca$ (including an intersection at $\infty$, i.e., $H$
is parallel to $H_1$ and $H_2$), then $\varphi(H)$ is the affine span
of $q$ and $H_1\cap H_2$. The equivalence classes of this equivalence
relation form a polyhedral decomposition of
$\rr^d$. Figure~\ref{fig:nong} shows an example. The arrangement $\ca$
is given by solid lines, and the lines (1-faces) of the polyhedral
decomposition $\Gamma$ by broken lines. Each face $F$ of $\Gamma$ is
marked with the number $v(\ca,p)$ of valid orderings for $p\in F$.

What can be said about the polyhedral complex $\Gamma$? The
2-dimensional case illustrated in Figure~\ref{fig:nong} is somewhat
misleading. Let $\ca$ be an arrangement in $\rr^d$, and let
$p\in\rr^d-\bigcup_{H\in\ca} H$. Suppose that $H_1,\dots,H_4\in\ca$
with $H_1\neq H_2$ and $H_3\neq H_4$. If aff$(p,H_1\cap
H_2)=\aff(p,H_3\cap H_4)$, then the two $(d-2)$-dimensional subspaces
$H_1\cap H_2$ and $H_3\cap H_4$ must both lie on an affine hyperplane
$K$. If $d=2$ then this condition always holds, but for $d>2$ it does
not hold for ``generic'' $\ca$. Thus for generic $\ca$ and $d>4$, the
valid order arrangements $\vo(\ca,p)$ have the same number of
hyperplanes for any $p$. However, they may still differ in how the
hyperplanes intersect. It may be interesting to further investigate
the properties of $\Gamma$.


\begin{thebibliography}{99}
%
 \bibitem{bry} T. Brylawski, Coordinatizing the Dilworth truncation,
   in \emph{Matroid Theory (Szeged, 1982)},
   Colloq.\ Math.\ Soc.\ J\'anos Bolyai \textbf{40}, North-Holland,
   Amsterdam, 1985, pp.~61--95.
%
 \bibitem{bry2} T. Brylawski, Constructions, in \cite{white1},
   pp.~127--223. 
%
 \bibitem{chordal} ``Chordal graph,'' \emph{Wikipedia, The Free
   Encyclopedia}, Wikimedia Foundation, Inc.
%
 \bibitem{dev} M. L. Develin, Topics in Discrete Geometry,
   Ph.D.\ thesis, Univ.\ of California at Berkeley, 2000.
%
 \bibitem{dil} R. P. Dilworth, Dependence relations in a semi-modular
   lattice, \emph{Duke Math.\ J.}\ \textbf{11} (1944), 575--587.
%
 \bibitem{list} ``List coloring,'' \emph{Wikipedia, The Free
   Encyclopedia}, Wikimedia Foundation, Inc.
%
 \bibitem{mason} J. H. Mason, Matroids as the study of geometrical
   configurations, in \emph{Higher Combinatorics} (M. Aigner, ed.),
   NATO Advanced Study Institutes Series \textbf{31},
   Reidel, Dordrecht/Boston, 1977, pp.~133--176.
%
 \bibitem{or-te} P. Orlik and H. Terao, \emph{Arrangements of
   Hyperplanes}, Grundlehren der Mathematischen Wissenschaften
   \textbf{300}, Springer-Verlag, Berlin, 1992.
%
 \bibitem{oxley} J. Oxley, \emph{Matroid Theory}, second ed., Oxford
   Graduate Texts in Mathematics \textbf{21}, Oxford University Press,
   Oxford, 2011. 
%
 \bibitem{op} R. Stanley, Two poset polytopes, \emph{Discrete
   Comput.\ Geom.}\ \textbf{1} (1986), 9--23.
%
 \bibitem{rs:hyp} R. Stanley, An introduction to hyperplane
   arrangements, in \emph{Geometric Combinatorics} (E. Miller,
   V. Reiner, and B. Sturmfels, eds.), IAS/Park City Mathematics
   Series, vol.~13, American Mathematical Society, Providence, RI,
   2007, pp.~389--496.
%
 \bibitem{ec2} R. Stanley, \emph{Enumerative Combinatorics}, vol.\ 2,
   Cambridge University Press, New York/Cambridge, 1999.
%
 \bibitem{ec1} R. Stanley, \emph{Enumerative Combinatorics}, vol.~1,
   second ed., Cambridge University Press, 2012
%
 \bibitem{welsh} D. J. A. Welsh, \emph{Matroid theory},
   L.\ M.\ S.\ Monographs, No. 8, Academic Press, London/New York,
   1976.
%
 \bibitem{white1} N. White (ed.), \emph{Theory of Matroids},
   Encyclopedia of Mathematics and Its Applications \textbf{26},
   Cambridge University Press, Cambridge, 1986.
%
 \bibitem{ziegler} G. Ziegler, \emph{Lectures on Polytopes}, Graduate
   Texts in Mathematics \textbf{152}, Springer-Verlag, New York, 1995.
\end{thebibliography}
\end{document}